\newtheorem{theorem}{Theorem}[section]
\newtheorem{proposition}{Proposition}[section]
\newtheorem{problem}{Problem}[section]
\newtheorem{remark}{Remark}[section]
\newcommand{\R}{\mathbb{R}}
\newcommand{\map}[3]{#1:#2 \rightarrow #3}
\DeclarePairedDelimiterX{\infdivx}[2]{(}{)}{%
  #1\;\delimsize\|\;#2%
}
\numberwithin{equation}{section}
\title{
Flow Matching for Averaged Systems
}
\author{Daniel Owusu Adu and Yongxin Chen
}
\begin{document}

\maketitle
\thispagestyle{empty}
\pagestyle{empty}

\begin{abstract}
We extend flow matching to ensembles of linear systems in both deterministic and stochastic settings. Averaging over system parameters induces memory leading to a non-Markovian interpolation problem for the stochastic case. In this setting, a control law that achieves the distributional controllability is characterized as the conditional expectation of a Volterra-type control. This conditional expectation in the stochastic settings motivates an open-loop characterization in the noiseless-deterministic setting. Explicit forms of the conditional expectations are derived for special cases of the given distributions and a practical numerical procedure is presented to approximate the control inputs. A by-product of our analysis is a numerical split between the two regimes. For the stochastic case, history dependence is essential and we implement the conditional expectation with a recurrent network trained using independent sampling. For the deterministic case, the flow is memoryless and a feedforward network learns a time-varying gain that transports the ensemble. We show that to realize the full target distribution in this deterministic setting, one must first establish a deterministic sample pairing (e.g., optimal-transport or Schrodinger-bridge coupling), after which learning reduces to a low-dimensional regression in time.
\end{abstract}


\section{Introduction}\label{sec: Introduction}
Flow Matching (FM)~\cite{YL-RTC-HBH-MN-ML:22,MSA-EVE:22,XL-CG-QL:22}, a generative modeling framework also known as continuous normalizing flows~\cite{RTC-YR-JB-DKD:18,WG-RTC-JB-IS-DD:18}, smoothly interpolates between a source distribution $\mu_0 \in \mathcal{P}(\mathbb{R}^d)$ and a target distribution $\mu_f \in \mathcal{P}(\mathbb{R}^d)$ via a continuous-time flow defined by a velocity field. This framework comprises two steps \cite{YL-RTC-HBH-MN-ML:22}: First, choose a probability path $t\mapsto\mu_t$ interpolating between the source $\mu_0$ and target $\mu_f$ distributions and obtain a corresponding vector field $u$ for this interpolation. Secondly, train a vector field using a neural network $v_{\alpha}$, where $\alpha$ represents the learnable parameters, by solving a least-squares regression problem: 
\begin{equation*}
\min_{v_{\alpha}\in \mathcal{F}}\int_0^{t_f}\mathbb{E}_{x\sim\mu_t}(\|v_{\alpha}(x,t)-u(x,t)\|^2)dt.
\end{equation*}
Here $\mathcal{F}$ is a function class, typically a neural network, that parametrizes the velocity field $v_{\alpha}(x,t)$. Some approaches in selecting the pair $(\mu,u)$ is either through 
optimal transport (OT)\ Schrodinger bridge (SB) problem~\cite{RWB:07,FAB-EZ:12,YC-TTG-MP:16,YC-TTG-MP:16b,YC:23,PDP:91}. Direct implementation of this optimization is often computationally infeasible.  However, conditioning the loss significantly simplifies the computation~\cite{YL-RTC-HBH-MN-ML:22}. 

This paper aims to extend the flow matching framework to settings where the interpolation is constrained by a class of ensemble of control systems: 
\begin{equation}\label{eq:stochastic ensemble of systems}
d X_{\epsilon}(t,\theta)=(A(\theta)X_{\epsilon}(t,\theta)+B(\theta)u_{\epsilon}(t))d t+\sqrt{\epsilon}B(\theta)d W(t).
\end{equation}
Here $\epsilon\geq 0$ is the noise intensity, $(X_{\epsilon}(t,\theta))_{t\in[0,t_f]}$ is an $\R^d$-valued state process of an individual system indexed by  $\theta\in[0,1]$, the quantities $A\in C([0,1];\mathbb{R}^{d\times d})$ and $B\in C([0,1];\mathbb{R}^{d\times m})$ are state and control/noise channels, and $(u_{\epsilon}(t))_{t\in[0,t_f]}$ is an $\mathbb{R}^m$-valued \emph{parameter-independent control process} that is adapted to the filtration generated by the Brownian motion $(W(t))_{t\in[0,t_f]}$. Since the distributions $\mu_0$ and $\mu_f$ are independent of the parameter $\theta$ the natural candidate that alignes with the FM framework is the averaged of~\eqref{eq:stochastic ensemble of systems} over the parameter $\theta\in[0,1]$. However, an important consequence of this averaging is that the resulting averaged process becomes non-Markovian~\cite{DOA-YC:23}. 
Thus, by interpolating $\mu_0$ to $\mu_f$ using the averaged process, we introduce non-Markovian structure and considerations into the FM framework. This fundamentally extends traditional FM framework and beyond its Markovian control setting recently studied in~\cite{YM-MA-AT-YC:24}, making it suitable for applications involving uncertainty, memory effects, and large-scale generative modeling.

The control of large ensembles, prevalent in various applications including quantum systems, often relies on applying a single control input to all members~\cite{JSL-NK:06,JSL:06,JSL-NK:05,MHL:86}. However, one often relies on optimal control to design a control that interpolates between $\mu_0$ and $\mu_f$~\cite{ADO:22,DOA-YC:24}. In particular, our work in~\cite{ADO:22} studies OT between $\mu_0$ and $\mu_f$ using the averaged system~\eqref{eq:stochastic ensemble of systems}, where $\epsilon=0$. Our work in~\cite{DOA-YC:24} studies this interpolation, where $\epsilon>0$, through the SB framework.  
To emphasize on the motivation of this present work, we state here that, even though our work in~\cite{ADO:22} provides an optimal transport map for interpolating between $\mu_0$ and $\mu_f$, this transport map may be discontinuous and sometimes difficult if not impossible to be used in generative modeling. Although our recent work in SB in~\cite{DOA-YC:24} aims to mitigate this issue by providing a stochastic interpolation, it requires a lot of iteration solvers in the context of generative modeling. Furthermore, both frameworks in~\cite{ADO:22,DOA-YC:24} require solving a \emph{global} optimization problem to achieve interpolation. The motivation of the FM framework is to offers a directly \emph{local} trainable smooth flow model, making it highly suitable for generative modeling and high-dimensional applications. 

Aside the fact that our work also generalizes the Markovian setting in FM framework in~\cite{YM-MA-AT-YC:24} to non-Markovian settings, clearly the tools in~\cite{YM-MA-AT-YC:24} is not readily applicable in our case. For example, the authors in~\cite{YM-MA-AT-YC:24} utilizes the Fokker-Planck equation associated to the Markov process to show equivalence in distribution of a local process and a global process. The memory characteristic of the non-Markov setting makes such quantity ill-suited. 
We overcome this challenge by directly computing the distributions and showing equivalency through their transition distribution. Also, while in the Markov case the feedback control law that achieves the deterministic and stochastic interpolation is exactly the same form, we will see that this is not true in the non-Markov settings. Computationally, we show that, there is a numerical split between the deterministic and stochastic settings. More precisely, in the stochastic setting, the resulting non-Markovian dynamics can be steered from a given initial distribution to a prescribed target using a recurrent neural network (RNN) (e. g., Long Short-Term Memory (LSTM) network)  or a Transformer trained with importance sampling or independent sampling. In the deterministic setting, the distributional transformation can be achieved with a feedforward network (FNN) by training on pairs produced by a deterministic coupling (e.g., an OT permutation plan), which ensures the learned open-loop control hits the desired terminal distribution. The deterministic coupling help reduce the learning to a one dimensional regression in time. 


The structure of the paper is as follows: In Section~\ref{sec: Averaged controllability between Diracs}, we focus on interpolating two points using the averaged system of~\eqref{eq:stochastic ensemble of systems}, for $\epsilon=0$ and $\epsilon> 0$, separately. In Section~\ref{sec: Averaged Controllability between General Distributions}, we discuss the generalization to interpolating between distributions. In Section~\ref{sec: Gaussian and mixture Gaussian initial and target distribution}, we provide an explicit formula for the control law for the special case where the source and the target distributions are Gaussian and mixed Gaussian distributions, respectively. In Section~\ref{sec: Flow Matching Algorithm and Numerical Results}, we discuss the general case for $\epsilon=0$ and $\epsilon> 0$. We employ FM methodology to compute the control that generates their respective flow process that interpolates between any given initial and target distributions. We conclude with a numerical analysis for our flow matching approach.

\section{Averaged controllability between Diracs}\label{sec: Averaged controllability between Diracs}
This section presents a preliminary result concerning interpolating two points using the averaged of an ensemble of systems. We present both deterministic and stochastic cases.
\subsection{Deterministic Case}
We consider the ensemble of linear systems given by~\eqref{eq:stochastic ensemble of systems}, where $\epsilon=0$, initialized at $X(0,\theta)=x_0$.
Note that the state of an individual system indexed by $\theta$ at time $t$ is characterized by 
\begin{align}\label{eq:ensemble of state}
X(t,\theta)=e^{A(\theta)t} x_0+\int_{0}^{t}e^{A(\theta)(t-\tau)}B(\theta)u(\tau)d\tau.
\end{align}

\begin{problem}\label{prob: 1}
Given any pair $(x_0,x_f)\in\R^d\times\R^d$, find a parameter-independent control input $u\in L^1([0,t_f];\mathbb{R}^m)$ such that the ensemble of states in~\eqref{eq:ensemble of state} satisfies $\int_{0}^{1} x(t_f,\theta)d\theta=x_f$.  
\end{problem}

Given any pair $(x_0,x_f)\in\R^d\times\R^d$, if such a control input exists, we say that the ensemble of system~\eqref{eq:stochastic ensemble of systems}, where $\epsilon=0$, is averaged controllable. The following result guarantees the existence of such controls. 
\begin{theorem}\cite[Theorem~1]{ZE:14}\label{thm:rank condition}
The ensemble of systems~\eqref{eq:stochastic ensemble of systems}, where $\epsilon=0$, is said to be averaged controllable  if and only if the vector space spanned by the columns of $\left\{\int_{0}^{1}A(\theta)^kB(\theta)d\theta \right\}_{k= 0}^{\infty}$ is of rank $d$.
\end{theorem}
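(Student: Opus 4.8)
The plan is to reduce averaged controllability to a Kalman-type reachability statement for a convolution (Volterra) operator in time, and then convert the resulting ``no annihilating covector'' condition into the stated moment/rank condition using real-analyticity of the matrix exponential.

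First I would integrate the variation-of-constants formula~\eqref{eq:ensemble of state} over $\theta\in[0,1]$, which yields $\int_0^1 X(t_f,\theta)\,d\theta = c + \int_0^{t_f} M(t_f-\tau)\,u(\tau)\,d\tau$, where $c:=\big(\int_0^1 e^{A(\theta)t_f}\,d\theta\big)x_0$ is a fixed vector and $M(s):=\int_0^1 e^{A(\theta)s}B(\theta)\,d\theta\in\R^{d\times m}$ is continuous on $[0,t_f]$ (the integrand is jointly continuous and $[0,1]$ is compact). Since $c$ does not depend on $u$, Problem~\ref{prob: 1} is solvable for every pair $(x_0,x_f)$ precisely when the reachable set $\mathcal R:=\{\int_0^{t_f} M(t_f-\tau)u(\tau)\,d\tau : u\in L^1([0,t_f];\R^m)\}$ is all of $\R^d$. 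As the image of a linear map, $\mathcal R$ is a linear subspace, so I would dualize: $\mathcal R\neq\R^d$ iff there is $p\neq 0$ with $p^\top\!\int_0^{t_f} M(t_f-\tau)u(\tau)\,d\tau=0$ for every admissible $u$, and by testing against approximate identities (and using continuity of $M$) this is equivalent to $p^\top M(s)=0$ for all $s\in[0,t_f]$.

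The second half is to read off the moments. For fixed $p$, the scalar function $\phi(s):=p^\top M(s)=\int_0^1 p^\top e^{A(\theta)s}B(\theta)\,d\theta$ is real-analytic on $\R$, and differentiating under the $\theta$-integral gives $\phi^{(k)}(0)=p^\top\big(\int_0^1 A(\theta)^kB(\theta)\,d\theta\big)$ for all $k\ge 0$. Since a real-analytic function that vanishes on the interval $[0,t_f]$ has all Taylor coefficients at $0$ equal to zero -- and conversely a function whose Taylor coefficients all vanish is identically zero -- the condition $\phi\equiv 0$ on $[0,t_f]$ is equivalent to $p^\top\!\int_0^1 A(\theta)^kB(\theta)\,d\theta=0$ for every $k\ge 0$, i.e.\ to $p$ being orthogonal to the span of the columns of $\{\int_0^1 A(\theta)^kB(\theta)\,d\theta\}_{k=0}^\infty$. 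Chaining the equivalences, averaged controllability holds iff no nonzero covector annihilates that span, i.e.\ iff it has rank $d$; moreover the argument never uses the value of $t_f>0$, so averaged controllability is horizon-independent.

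The step I expect to be the main obstacle is the analyticity/moment passage: one must (i) justify interchanging $d/ds$ with the $\theta$-integral -- routine by dominated convergence and compactness of $[0,1]$, but it should be stated -- and (ii) invoke the identity theorem for real-analytic functions to pass from ``$\phi$ vanishes on an interval'' to ``all moments $p^\top\!\int_0^1 A(\theta)^kB(\theta)\,d\theta$ vanish.'' A related subtlety worth a remark is that, unlike the classical single-system Kalman test, Cayley--Hamilton does \emph{not} allow truncating the family at $k=d-1$: there is no common characteristic polynomial for $\{A(\theta)\}_{\theta\in[0,1]}$, so the infinite family of moments is genuinely needed -- this is precisely where $\theta$-averaging, rather than a single matrix, changes the picture.
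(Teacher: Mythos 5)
The paper states this result without proof, citing \cite[Theorem~1]{ZE:14}, so there is no in-paper argument to compare against; your reconstruction is correct and is essentially the standard duality-plus-analyticity proof from that reference (reduce to surjectivity of the input-to-averaged-state map, dualize to $p^{\mathrm{T}}\int_0^1 e^{A(\theta)s}B(\theta)\,d\theta\equiv 0$, then use real-analyticity in $s$ to identify this with the vanishing of all moments $p^{\mathrm{T}}\int_0^1 A(\theta)^kB(\theta)\,d\theta$). Your closing remark that Cayley--Hamilton cannot truncate the family at $k=d-1$ is also consistent with the paper's own remark following Proposition~\ref{prop: Interpolation of determinsstic case}; the only cosmetic nit is that $\phi(s)=p^{\mathrm{T}}M(s)$ is $\mathbb{R}^{1\times m}$-valued rather than scalar, which does not affect the argument.
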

Following from~\cite{ADO:22}, this is simplified to the invertibility of the following matrix:
\begin{align}\label{eq: Gramian}
G_{t_f,0}:=&\int_{0}^{t_f}\varphi(t_f,\tau)\varphi(t_f,\tau)^{\mathrm{T}}d\tau,
\end{align}
where
\begin{equation}\label{eq: convolution function}
\Phi(t_f,\tau)=\int_{0}^{1}e^{A(\theta)(t_f-\tau)}B(\theta)d\theta    
\end{equation}
is the convolution kernel. Since this kernel is not a transition matrix, unlike standard control problem, the above matrix $G_{t_f,0}$ is not the controllability Gramian.

\begin{proposition}\label{prop: Interpolation of determinsstic case}
Given any pair $z=(x_0,x_f)\in\R^d\times\R^d$. Suppose $G_{t_f,0}$ defined in~\eqref{eq: Gramian} is invertible. Then, a control that solves Problem~\ref{prob: 1} is characterized as 
\begin{equation}\label{eq: averaged control}
u^z(t)=\Phi(t_f,t)^{\mathrm{T}} G_{t_f,0}^{-1}\left(x_f-\left(\int_{0}^{1}e^{A(\theta)t_f}d\theta\right) x_0\right).
\end{equation}
The resulting interpolation:
\begin{equation}\label{eq: Ensemble interpolation}
X^z(t,\theta)=e^{A(\theta)t} x_0+\int_{0}^{t}e^{A(\theta)(t-\tau)}B(\theta)\Phi(t_f,\tau)^{\mathrm{T}}d\tau G_{t_f,0}^{-1}\left(x_f-\left(\int_{0}^{1}e^{A(\theta)t_f}d\theta\right) x_0\right).   
\end{equation}
Moreover, the control in~\eqref{eq: averaged control} minimizes the $\mathrm{L}^2$-norm $\int_0^{t_f}\|u(t)\|^2dt$ among all admissible control.
\end{proposition}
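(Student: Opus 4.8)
The plan is to reduce Problem~\ref{prob: 1} to a single finite-dimensional linear constraint on the control $u$, exhibit $u^z$ as the minimum-norm solution of that constraint, and then obtain~\eqref{eq: Ensemble interpolation} by direct substitution into~\eqref{eq:ensemble of state}. First I would integrate~\eqref{eq:ensemble of state} over $\theta\in[0,1]$ at time $t=t_f$. Because $A$ and $B$ are continuous on the compact interval $[0,1]$, the families $\theta\mapsto e^{A(\theta)t_f}$ and $(\theta,\tau)\mapsto e^{A(\theta)(t_f-\tau)}B(\theta)$ are uniformly bounded, so Fubini's theorem applies to any $u\in L^1([0,t_f];\R^m)$ and yields
\[
\int_0^1 X(t_f,\theta)\,d\theta=\Big(\int_0^1 e^{A(\theta)t_f}\,d\theta\Big)x_0+\int_0^{t_f}\Phi(t_f,\tau)\,u(\tau)\,d\tau,
\]
with $\Phi$ the convolution kernel of~\eqref{eq: convolution function}. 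Hence Problem~\ref{prob: 1} is equivalent to the linear equation $\int_0^{t_f}\Phi(t_f,\tau)u(\tau)\,d\tau=b$, where $b:=x_f-\big(\int_0^1 e^{A(\theta)t_f}\,d\theta\big)x_0$.

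Next I would verify that $u^z$ in~\eqref{eq: averaged control} solves this equation. Since $\tau\mapsto\Phi(t_f,\tau)$ is continuous on $[0,t_f]$, $u^z$ is continuous, hence lies in $L^1\cap \mathrm{L}^2$ and is admissible. Substituting and using the definition~\eqref{eq: Gramian} of $G_{t_f,0}$ together with its invertibility,
\[
\int_0^{t_f}\Phi(t_f,\tau)\,u^z(\tau)\,d\tau=\Big(\int_0^{t_f}\Phi(t_f,\tau)\Phi(t_f,\tau)^{\mathrm{T}}\,d\tau\Big)G_{t_f,0}^{-1}b=G_{t_f,0}G_{t_f,0}^{-1}b=b .
\]
Plugging $u^z$ into~\eqref{eq:ensemble of state} then gives~\eqref{eq: Ensemble interpolation} verbatim.

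For the minimum-energy claim, let $u$ be any admissible control satisfying the constraint, so that $\int_0^{t_f}\Phi(t_f,\tau)\big(u(\tau)-u^z(\tau)\big)\,d\tau=0$. Writing $u=u^z+(u-u^z)$ and expanding the $\mathrm{L}^2$ norm, the cross term vanishes: since $G_{t_f,0}$ is symmetric, $u^z(\tau)^{\mathrm{T}}=b^{\mathrm{T}}G_{t_f,0}^{-1}\Phi(t_f,\tau)$, so
\[
\int_0^{t_f}u^z(\tau)^{\mathrm{T}}\big(u(\tau)-u^z(\tau)\big)\,d\tau=b^{\mathrm{T}}G_{t_f,0}^{-1}\int_0^{t_f}\Phi(t_f,\tau)\big(u(\tau)-u^z(\tau)\big)\,d\tau=0 .
\]
Therefore $\int_0^{t_f}\|u(\tau)\|^2\,d\tau=\int_0^{t_f}\|u^z(\tau)\|^2\,d\tau+\int_0^{t_f}\|u(\tau)-u^z(\tau)\|^2\,d\tau\ge\int_0^{t_f}\|u^z(\tau)\|^2\,d\tau$, with equality iff $u=u^z$ almost everywhere. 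Conceptually this is the standard pseudoinverse/projection fact: $u^z$ lies in the range of the adjoint of the linear ``reachability'' map $u\mapsto\int_0^{t_f}\Phi(t_f,\tau)u(\tau)\,d\tau$, hence is orthogonal to its kernel.

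I do not expect a genuine obstacle: once the first-step reduction is in place, the remainder is routine linear algebra together with a one-line orthogonality argument. The only points requiring a little care are the Fubini interchange (covered by compactness of $[0,1]$ and continuity of $A,B$) and the observation that $G_{t_f,0}$ is symmetric, so $G_{t_f,0}^{-\mathrm{T}}=G_{t_f,0}^{-1}$; worth flagging, though it does not affect the computation, is that $G_{t_f,0}$ is assembled from the convolution kernel $\Phi$ rather than from a state-transition matrix, so it is not the classical controllability Gramian even though it plays the analogous role here.
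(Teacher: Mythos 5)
Your proof is correct, and it takes a genuinely different route from the paper's. The paper argues by duality: following \cite{ADO:22} and Zuazua's averaged-controllability framework, it writes the minimum-energy control as $u^*(t)=\int_0^1 B^{\mathrm{T}}(\theta)\varphi^*(t,\theta)\,d\theta$ for an adjoint trajectory $\varphi^*(t,\theta)=e^{A^{\mathrm{T}}(\theta)(t_f-t)}\varphi_f^*$, and obtains $\varphi_f^*=G_{t_f,0}^{-1}\bigl(x_f-\bigl(\int_0^1 e^{A(\theta)t_f}d\theta\bigr)x_0\bigr)$ by minimizing the dual quadratic functional $J(\varphi_f)$ over terminal costates. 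You instead work on the primal side: Fubini reduces the averaged terminal constraint to the single moment equation $\int_0^{t_f}\Phi(t_f,\tau)u(\tau)\,d\tau=b$, you check $u^z$ satisfies it via $G_{t_f,0}G_{t_f,0}^{-1}=I$, and you get optimality from the standard orthogonality decomposition (controls in the range of the adjoint of the reachability map are orthogonal to its kernel). The two arguments are dual to one another --- the normal equation $G_{t_f,0}\varphi_f=b$ that you solve implicitly is exactly the first-order condition for the paper's functional $J$ --- but yours is self-contained and elementary, avoiding the citation of the adjoint-system machinery, while the paper's formulation connects the result to the broader averaged-control duality theory (and is the form that extends to infinite-dimensional ensembles). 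Your flagged caveats (Fubini justified by continuity on the compact parameter interval, symmetry of $G_{t_f,0}$, and the fact that $G_{t_f,0}$ is built from the convolution kernel rather than a transition matrix) are all apt; the last point is exactly the remark the paper makes after defining~\eqref{eq: Gramian}.
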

\begin{remark}
Before presenting the proof, suppose that $A(\theta)=A$, for all $\theta\in[0,1]$ in~\eqref{eq:stochastic ensemble of systems}, where $\epsilon=0$. Under this assumption, the matrix in~\eqref{eq: Gramian} simplifies to the classical controllability Gramian:
\begin{equation}\label{eq: Actual Control_Gramian}
\tilde{G}_{t_f,0}:=\int_{0}^{t_f}\tilde{\Phi}(t_f,\tau)\tilde{\Phi}(t_f,\tau)^{\mathrm{T}}d\tau,    
\end{equation}
where   
\[
\tilde{\Phi}(t_f,\tau)=e^{A(t_f-\tau)}\tilde{B}
\]
is the simplified convolution kernel in~\eqref{eq: convolution function} and
\[
\tilde{B}:=\int_{0}^{1}B(\theta)d\theta
\]
is the averaged control channel. Consequently, the control law~\eqref{eq: averaged control} reduces to:
\begin{align}\label{eq: classic control}
u^z(t)=&\tilde{\Phi}(t_f,t)^{\mathrm{T}} \tilde{G}_{t_f,0}^{-1}\left(x_f-e^{At_f} x_0\right)
\end{align}
and $X^z(t,\theta)=X^z(t)$, where
\begin{equation*}
X^z(t)=e^{At} x_0+\int_{0}^{t}e^{A(t-\tau)}\tilde{B}\tilde{\Phi}(t_f,\tau)^{\mathrm{T}}d\tau \tilde{G}_{t_f,0}^{-1}\left(x_f-e^{At_f} x_0\right).   
\end{equation*}
It is well-known, see for instance~\cite[pp.~137]{RWB:15} that, if the matix~\eqref{eq: Actual Control_Gramian} is invertible then, the control in~\eqref{eq: classic control} minimizes the $\mathrm{L}^2$-norm $\int_0^{t_f}\|u(t)\|^2dt$ among all admissible control for the linear system:
\begin{equation}\label{eq: classical linear system}
\dot{X}(t)=AX(t)+\tilde{B}u(t),\quad X(0)=x_0\text{ and }\quad X(t_f)=x_f. 
\end{equation}
We state here that if the matix~\eqref{eq: Actual Control_Gramian} is invertible then one says that the pair $(A,\tilde{B})$ in the system~\eqref{eq: classical linear system} is controllable. That is the matrix $\begin{bmatrix}
\tilde{B}\hspace{3mm} A\tilde{B}\hspace{3mm} A^2\tilde{B}\hspace{3mm}\dots\hspace{3mm}A^{d-1}\tilde{B}    
\end{bmatrix}$ is of rank $d$, see~\cite[Theorem~5]{EBL-LM:67}. Note that this result is different from Theorem~\ref{thm:rank condition}, in that one requires, in general, checking this for all $k\in\mathbb{Z}_+$ as opposed to $k$ running from $0$ to $d-1$ in the latter case.
    
\end{remark}

\begin{proof}
From~\cite{ADO:22} and~\cite[Theorem~3]{ZE:14}, we have that the control input with  minimum $\mathrm{L}^2$-norm  is 
\begin{equation}\label{eq: optimal control}
u^*(t)=\int_{0}^{1}B^{\mathrm{T}}(\theta)\varphi^*(t,\theta)d\theta,
\end{equation}
where $\varphi^*$ is the solution to the adjoint system
the adjoint system
\begin{align}\label{eq:adjoint system}
\dot{\varphi^*}(t,\theta)=&-A^{\mathrm{T}}(\theta)\varphi^*(t,\theta),\cr
\varphi^*(t_f,\theta)=&\varphi_f^*
\end{align}
where $\varphi^*_f$ solves 
\begin{align}\label{eq:functional}
\min_{\varphi_f\in\R^d} J(\varphi_f):&=\frac{1}{2}\int_{0}^{t_f}\left |\int_{0}^{1} B^{\mathrm{T}}(\theta)\varphi(t,\theta)d\theta\right |^2dt -\langle x_f,\varphi_f\rangle+\left\langle x_0,\int_{0}^{1}\varphi(0,\theta)d\theta\right\rangle.
\end{align}
By substituting 
\[
\varphi(t,\theta)=e^{A^{\mathrm{T}}(\theta)(t_f-t)}\varphi_f\quad\text{and}\quad \varphi(0,\theta)=e^{A^{\mathrm{T}}(\theta)t_f}\varphi_f
\]
in~\eqref{eq:functional}, we have that 
\[
\varphi_f^*=G_{t_f,0}^{-1}\left(x_f-\left(\int_{0}^{1}e^{A(\theta)t_f}d\theta\right) x_0\right).
\]
Hence 
\[
\varphi^*(t,\theta)=e^{A^{\mathrm{T}}(\theta)(t_f-t)}G_{t_f,0}^{-1}\left(x_f-\left(\int_{0}^{1}e^{A(\theta)t_f}d\theta\right) x_0\right).
\]
Therefore, we have that~\eqref{eq: averaged control} hold. By substituting~\eqref{eq: averaged control} in~\eqref{eq:ensemble of state} we get~\eqref{eq: Ensemble interpolation}. This completes the proof.
\end{proof}

Note that the averaged interpolation $x^z(t):=\int_0^1X^z(t,\theta)d\theta$ is characterized as
\begin{equation}\label{eq: averaged interpolation}
x^z(t)=\left(\int_0^1e^{A(\theta)t}d\theta\right) x_0+\left(\int_{0}^{t}\Phi(t,\tau)\Phi(t_f,\tau)^{\mathrm{T}}d\tau\right) G_{t_f,0}^{-1}\left(x_f-\left(\int_{0}^{1}e^{A(\theta)t_f}d\theta\right) x_0\right).    
\end{equation}

\subsection{Stochastic Case}
Consider the ensemble of systems~\eqref{eq:stochastic ensemble of systems} with the presence of noise intensity $\epsilon>0$,
initialized at $X_{\epsilon}(0,\theta)=x_0$, for all $\theta\in[0,1]$. 

The problem of interest is similarly stated as follows:
\begin{problem}\label{problem: stochastic}
Given any pair $(x_0,x_f)\in\R^d\times\R^d$, find a parameter-independent control process $u\in L^2([0,t_f];\mathbb{R}^m)$ such that the ensemble of states in~\eqref{eq:stochastic ensemble of systems} initialized at $X(0,\theta)=x_0$, for all $\theta\in[0,1]$, satisfy $\int_{0}^{1} X(t_f,\theta)d\theta=x_f$ almost surely.    
\end{problem}

Problem~\ref{problem: stochastic} aim to construct a stochastic bridge for the uncontrolled process characterized as
\begin{equation}\label{eq: pass_ave_at_t}
y_{\epsilon}(t)=\left(\int_0^1e^{A(\theta)t}d\theta\right) x_0+ \sqrt{\epsilon}\int_{0}^{t}\Phi(t,\tau) d W(\tau), 
\end{equation}
conditioned that 
\[
y_{\epsilon}(t_f)=x_f \quad\text{almost surely.}
\]
As described in~\cite{DOA-YC:23}, since $\int_0^1e^{A(\theta)t}d\theta$ is not a transition matrix we have that the averaged processes $(y_{\epsilon}(t))_{t\in[0,t_f]}$ in~\eqref{eq: pass_ave_at_t} and $\left(x_{\epsilon}(t)=\int_{0}^{1} X_{\epsilon}(t,\theta)d\theta\right)_{t\in[0,t_f]}$, corresponding to~\eqref{eq:stochastic ensemble of systems}, are both Volterra process with memory and hence non-Markovian process. The stochastic bridge has been developed in~\cite{DOA-YC:23} using a stochastic optimal control formulation. We state the result without proof.

\begin{proposition}
Given any $z=(x_0,x_f)\in\R^d\times\R^d$. Suppose $G_{t_f,t}$, for all $0\leq t<t_f$, in~\eqref{eq: Gramian} is invertible. Then, a conditional control that solves Problem~\ref{problem: stochastic} is characterized as 
\begin{equation}\label{eq: optimal stochastic control}
  u_{\epsilon}^z(t)= -\sqrt{\epsilon}\int_0^t\Phi(t_f,t)^T G_{t_f,\tau}^{-1}\Phi(t_f,\tau) dW(\tau)+\Phi(t_f,t)^{T}G_{t_f,0}^{-1}\left(x_f-\left(\int_{0}^{1}e^{A(\theta)t_f}d\theta\right) x_0\right).    
 \end{equation} 
 where $\Phi(t_f,\tau)$ is defined in~\eqref{eq: convolution function}.
The resulting conditional state process interpolation:
\begin{multline}\label{eq: averaged stochastic interpolation}
 x_{\epsilon}^z(t)=\left(\int_0^1e^{A(\theta)t}d\theta\right) x_0-\sqrt{\epsilon}\int_0^t\int_0^{\tau}\Phi(t,\tau)\Phi(t_f,\tau)^T G_{t_f,s}^{-1}\Phi(t_f,s) dW(s)d\tau\\+\left(\int_0^t\Phi(t,\tau)\Phi(t_f,\tau)^{T}d\tau\right) G_{t_f,0}^{-1}\left(x_f-\left(\int_{0}^{1}e^{A(\theta)t_f}d\theta\right) x_0\right) +\sqrt{\epsilon}\int_0^t \Phi(t,\tau) dW(\tau).
\end{multline}
Moreover, the control in~\eqref{eq: optimal stochastic control} minimizes the expectation of the $\mathrm{L}^2$-norm $\int_0^{t_f}\|u(t)\|^2dt$ among all admissible control.
\end{proposition}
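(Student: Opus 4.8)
The plan is to realize Problem~\ref{problem: stochastic} as a strictly convex quadratic program over square-integrable $\mathcal{F}_t$-adapted $\R^m$-valued controls and to read off~\eqref{eq: optimal stochastic control} from its stationarity conditions; the interpolation formula~\eqref{eq: averaged stochastic interpolation} is then a direct substitution, and optimality follows from convexity. First I would average~\eqref{eq:stochastic ensemble of systems} over $\theta$ to record that, under a generic control $u$, the controlled averaged process is the Gaussian Volterra process
\[
x_\epsilon^u(t)=\left(\int_0^1 e^{A(\theta)t}\,d\theta\right)x_0+\int_0^t\Phi(t,\tau)u(\tau)\,d\tau+\sqrt{\epsilon}\int_0^t\Phi(t,\tau)\,dW(\tau),
\]
so the terminal requirement $\int_0^1 X_\epsilon(t_f,\theta)\,d\theta=x_f$ a.s. becomes the affine almost-sure constraint
\[
\int_0^{t_f}\Phi(t_f,\tau)u(\tau)\,d\tau=x_f-\left(\int_0^1 e^{A(\theta)t_f}\,d\theta\right)x_0-\sqrt{\epsilon}\int_0^{t_f}\Phi(t_f,\tau)\,dW(\tau).
\]
Minimizing $\E\int_0^{t_f}\|u(t)\|^2\,dt$ over controls obeying this constraint is strictly convex, so a feasible control is the unique minimizer precisely when it meets the first-order condition derived below.

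Next I would attach an $\R^d$-valued $\mathcal{F}_{t_f}$-measurable multiplier $\lambda$ to the constraint and form $\E\bigl[\frac12\int_0^{t_f}\|u(t)\|^2\,dt-\langle\lambda,x_\epsilon^u(t_f)-x_f\rangle\bigr]$. Expanding $x_\epsilon^u(t_f)$, the only control-dependent coupling term is $\E\int_0^{t_f}\langle\lambda,\Phi(t_f,t)u(t)\rangle\,dt$, and since each $u(t)$ is $\mathcal{F}_t$-measurable the tower property replaces $\lambda$ there by the martingale $\lambda(t):=\E[\lambda\mid\mathcal{F}_t]$. Stationarity in $u$ therefore yields $u_\epsilon^z(t)=\Phi(t_f,t)^{\mathrm{T}}\lambda(t)$ for some martingale $\lambda(\cdot)$, and writing its It\^o representation $\lambda(t)=\lambda_0+\int_0^t\eta(s)\,dW(s)$ reduces the problem to determining the constant $\lambda_0\in\R^d$ and the integrand $\eta$.

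The decisive step is to feed $u_\epsilon^z(t)=\Phi(t_f,t)^{\mathrm{T}}\lambda(t)$ back into the constraint and invoke the stochastic Fubini theorem: since $\int_0^{t_f}\Phi(t_f,t)\Phi(t_f,t)^{\mathrm{T}}\bigl(\int_0^t\eta(s)\,dW(s)\bigr)\,dt=\int_0^{t_f}\bigl(\int_s^{t_f}\Phi(t_f,t)\Phi(t_f,t)^{\mathrm{T}}\,dt\bigr)\eta(s)\,dW(s)=\int_0^{t_f}G_{t_f,s}\,\eta(s)\,dW(s)$, with $G_{t_f,s}$ as in~\eqref{eq: Gramian}, the constraint collapses to
\[
G_{t_f,0}\,\lambda_0+\int_0^{t_f}\bigl(G_{t_f,s}\,\eta(s)+\sqrt{\epsilon}\,\Phi(t_f,s)\bigr)\,dW(s)=x_f-\left(\int_0^1 e^{A(\theta)t_f}\,d\theta\right)x_0.
\]
The right-hand side is deterministic, so the It\^o integral on the left is a.s. constant, hence a.s. zero; its quadratic variation then vanishes, forcing $\eta(s)=-\sqrt{\epsilon}\,G_{t_f,s}^{-1}\Phi(t_f,s)$ (here the standing invertibility of $G_{t_f,s}$ for $s<t_f$ is used) and $G_{t_f,0}\lambda_0=x_f-\bigl(\int_0^1 e^{A(\theta)t_f}\,d\theta\bigr)x_0$. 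Substituting back reproduces exactly~\eqref{eq: optimal stochastic control}. I expect this to be the main obstacle: because the kernel $\Phi$ is not a transition matrix, no HJB/Riccati recursion is available, and the whole derivation hinges on (i) the stochastic Fubini interchange that turns the time-average of an It\^o integral into an It\^o integral with $G_{t_f,\cdot}$ as integrand, and (ii) the rigidity that an It\^o integral equal a.s. to a deterministic vector must have a vanishing integrand.

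To close the loop, I would substitute~\eqref{eq: optimal stochastic control} into $x_\epsilon^u$ and use the same Fubini identity to obtain~\eqref{eq: averaged stochastic interpolation}; by the very cancellation that produced the constraint equation one gets $x_\epsilon^z(t_f)=x_f$ a.s., so $u_\epsilon^z$ solves Problem~\ref{problem: stochastic}, and its adaptedness is read off the formula. For the minimality claim I would invoke strict convexity, or argue directly: any other feasible control is $u_\epsilon^z+\delta$ with $\delta$ adapted and $\int_0^{t_f}\Phi(t_f,t)\delta(t)\,dt=0$ a.s., and integrating by parts the continuous finite-variation process $N(t)=\int_0^t\Phi(t_f,s)\delta(s)\,ds$ against the martingale $\lambda(\cdot)$ (the covariation term drops since $N$ has finite variation and $N(0)=N(t_f)=0$) gives $\E\int_0^{t_f}\langle u_\epsilon^z(t),\delta(t)\rangle\,dt=0$; hence $\E\int_0^{t_f}\|u\|^2\,dt=\E\int_0^{t_f}\|u_\epsilon^z\|^2\,dt+\E\int_0^{t_f}\|\delta\|^2\,dt$, minimized only at $\delta\equiv0$. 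This recovers the statement proved in~\cite{DOA-YC:23}.
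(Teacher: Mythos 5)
Your argument is correct, and it is worth noting that the paper itself offers no proof here: the proposition is imported from~\cite{DOA-YC:23}, which the authors describe as obtaining the bridge "using a stochastic optimal control formulation." Your route is a self-contained alternative in the spirit of the paper's own proof of Proposition~\ref{prop: Interpolation of determinsstic case} (an adjoint/dual variable pinning down the minimum-norm control), upgraded to the stochastic setting: you recast Problem~\ref{problem: stochastic} as minimizing $\E\int_0^{t_f}\|u(t)\|^2\,dt$ over adapted controls subject to the affine a.s.\ terminal constraint, obtain $u(t)=\Phi(t_f,t)^{\mathrm{T}}\lambda(t)$ with $\lambda(\cdot)$ the martingale projection of an $\mathcal{F}_{t_f}$-measurable multiplier, and then determine $\lambda_0$ and the martingale-representation integrand $\eta$ via the stochastic Fubini identity that produces $\int_0^{t_f}G_{t_f,s}\,\eta(s)\,dW(s)$ together with the It\^o-isometry rigidity (an It\^o integral that is a.s.\ deterministic has vanishing integrand). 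This is exactly the right replacement for the HJB/Fokker--Planck machinery that the non-Markovian kernel $\Phi$ rules out, and your closing orthogonality computation (integrating the finite-variation process $N$ against the martingale $\lambda$ to kill the cross term) turns the Lagrangian heuristic into a genuine verification argument. The substitution yielding~\eqref{eq: averaged stochastic interpolation} and the cancellation giving $x^z_{\epsilon}(t_f)=x_f$ a.s.\ both check out.

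One caveat, which you inherit from the proposition rather than introduce: since $G_{t_f,s}^{-1}$ in~\eqref{eq: Gramian} blows up like $(t_f-s)^{-1}$ as $s\to t_f$, the quantity $\E\int_0^{t_f}\|u^z_{\epsilon}(t)\|^2\,dt$ is generically infinite (already for the scalar Brownian bridge $A=0$, $B=1$ one gets $\E\|u^z_{\epsilon}(t)\|^2\sim \epsilon/(t_f-t)$), so every control achieving a.s.\ pinning has infinite expected energy and the clause "minimizes the expectation of the $\mathrm{L}^2$-norm" must be read as optimality on every subinterval $[0,t_f-\delta]$, or as the limit of soft terminal penalties as in~\cite{DOA-YC:23}. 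Your Pythagorean identity $\E\int\|u\|^2=\E\int\|u^z_{\epsilon}\|^2+\E\int\|\delta\|^2$ remains valid but only discriminates between controls under that reading; a sentence acknowledging this would make the optimality claim airtight.
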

\begin{remark}
    Note that if $\epsilon=0$ then $u_{\epsilon}^z$ and $x_{\epsilon}^z$ in~\eqref{eq: optimal stochastic control} and~\eqref{eq: averaged stochastic interpolation}, respectively, reduce to the deterministic case $u^z$ and $x^z$ in~\eqref{eq: averaged control} and~\eqref{eq: averaged interpolation}, respectively (see Figure~\ref{fig:ou_interpolation} and Figure~\ref{fig:anti_damped_interpolation}). Also, we have shown in~\cite{DOA-YC:23,DOA-YC:24} that if $A(\theta)=A$ then the Volterra process or stochastic feedforward in~\eqref{eq: optimal stochastic control} reduces to the Markov process
\begin{align}\label{eq: opt_pinn_con}
u^*_{\epsilon}(x^z_{\epsilon}(t),t)=-\tilde{B}^{\mathrm{T}}e^{A^{\mathrm{T}}(t_f-t)}G_{t_f,t}^{-1}\left(\left(e^{A(t_f-t)}\right)x^z_{\epsilon}(t)-x_f\right),
\end{align}
where 
\begin{equation}\label{eq: Markov_state_process}
d x_{\epsilon}(t)=(Ax_{\epsilon}(t,\theta)+\tilde{B}u^*_{\epsilon}(t))d t+\sqrt{\epsilon}\tilde{B}d W(t) 
\end{equation}
is a Markov state process, see~\cite{CY-GT:15}. We state here that under the condition $A(\theta)=A$ and $\epsilon=0$, the control~\eqref{eq: classic control} admits the feedback form~\eqref{eq: opt_pinn_con}. Therefore, the feedback control law that achieves the deterministic and stochastic interpolation is exactly the same, under the condition $A(\theta)=A$, see~\cite{YM-MA-AT-YC:24}. 
\end{remark}

\begin{figure}[!t]
    \centering
    \begin{subfigure}[b]{0.45\textwidth}
        \centering
        \includegraphics[width=\textwidth]{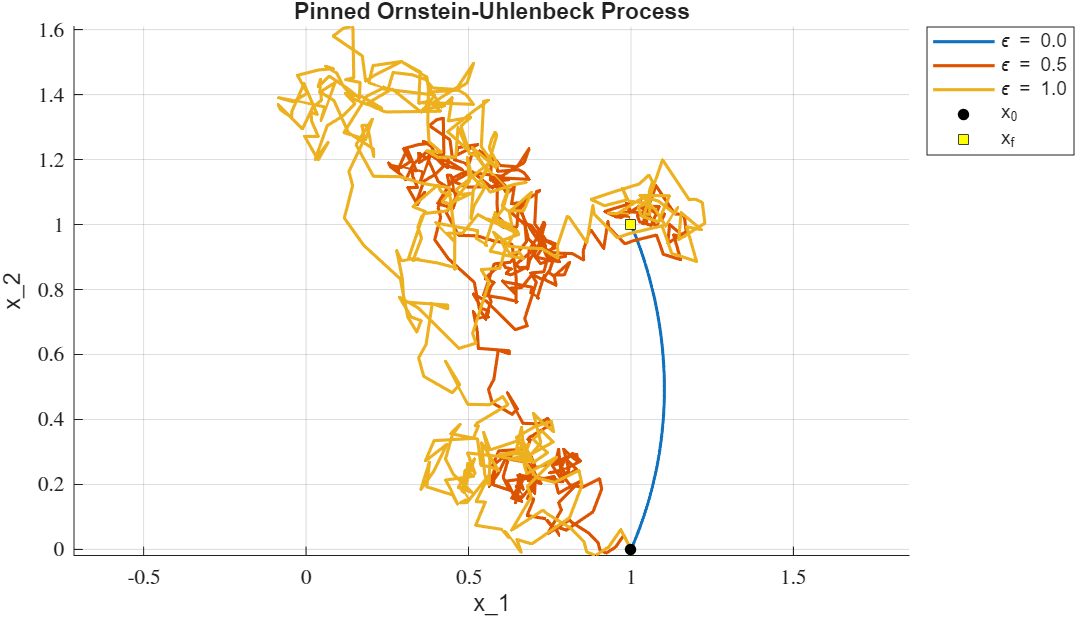}
        \caption{Pinned trajectories of the averaged Ornstein-Uhlenbeck process in the $(x_1,x_2)$-plane.}
        \label{fig:ou_2D}
    \end{subfigure}
    \hfill
    \begin{subfigure}[b]{0.45\textwidth}
        \centering
        \includegraphics[width=\textwidth]{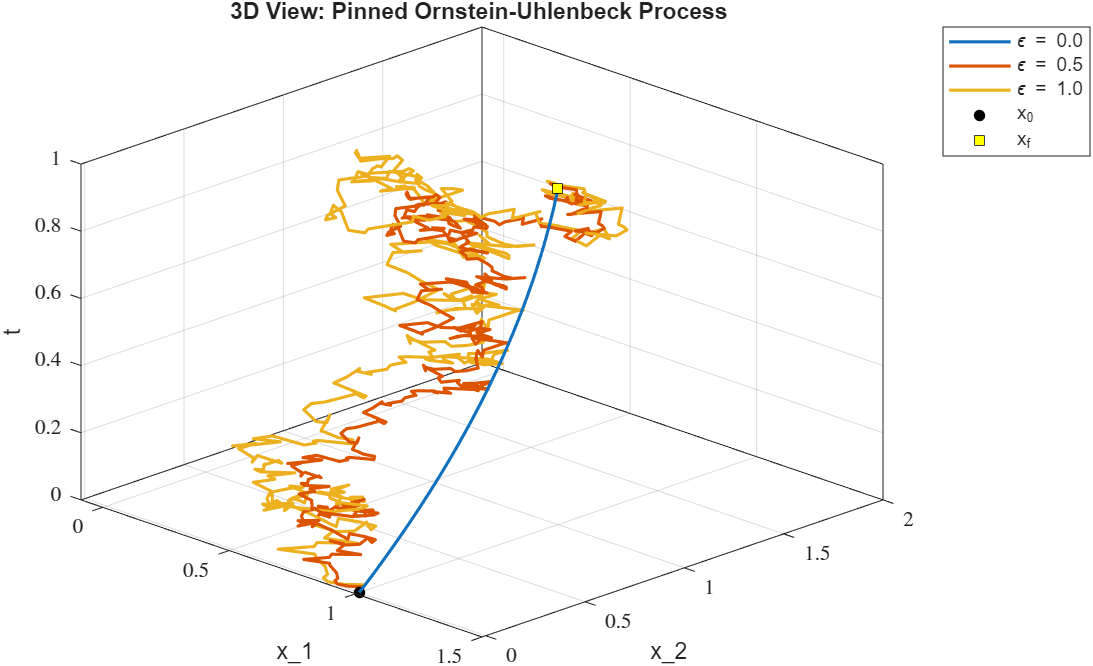}
        \caption{Spatio-temporal evolution of the pinned Ornstein--Uhlenbeck process in $(x_1,x_2,t)$.}
        \label{fig:ou_3D}
    \end{subfigure}
    \caption{Interpolation of fixed endpoints $x_0=[1,0]$ and $x_f=[1,1]$ by the averaged of linear systems in~\eqref{eq:stochastic ensemble of systems} governed by the 
    Ornstein-Uhlenbeck dynamics parameterized as~\eqref{eq: 2D-Ornstein-Uhelbeck_parameters}. 
    Each color corresponds to a different noise intensity $\epsilon \in \{0,0.5,1\}$.}
    \label{fig:ou_interpolation}
\end{figure}


\begin{figure}[!t]
    \centering
    \begin{subfigure}[b]{0.45\textwidth}
        \centering
        \includegraphics[width=\textwidth]{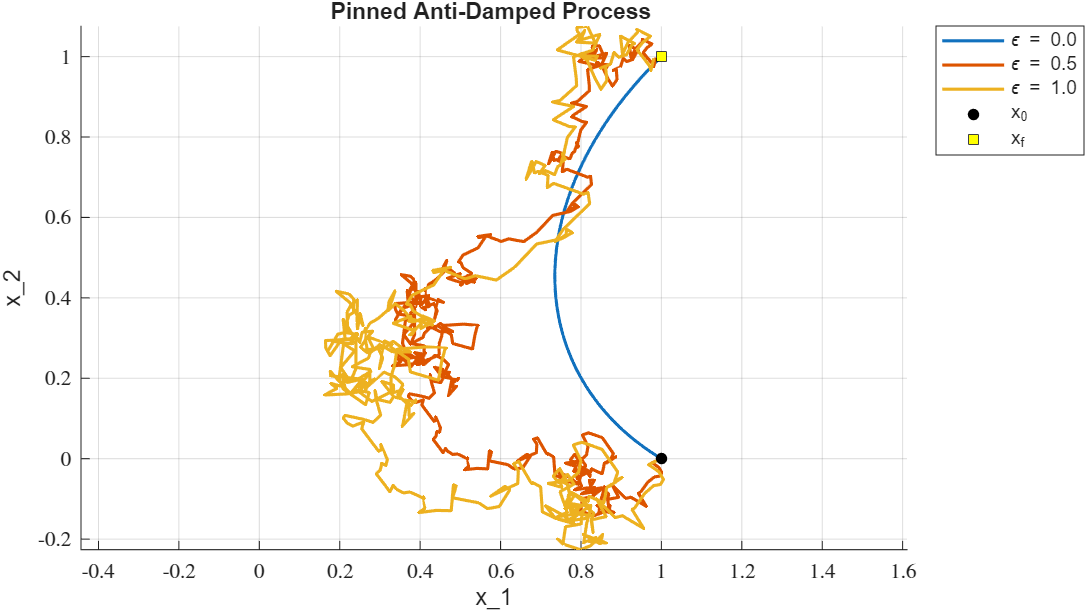}
        \caption{Pinned trajectories of the averaged anti-damped rotational process in $(x_1,x_2)$-plane.}
        \label{fig:ad_2D}
    \end{subfigure}
    \hfill
    \begin{subfigure}[b]{0.45\textwidth}
        \centering
        \includegraphics[width=\textwidth]{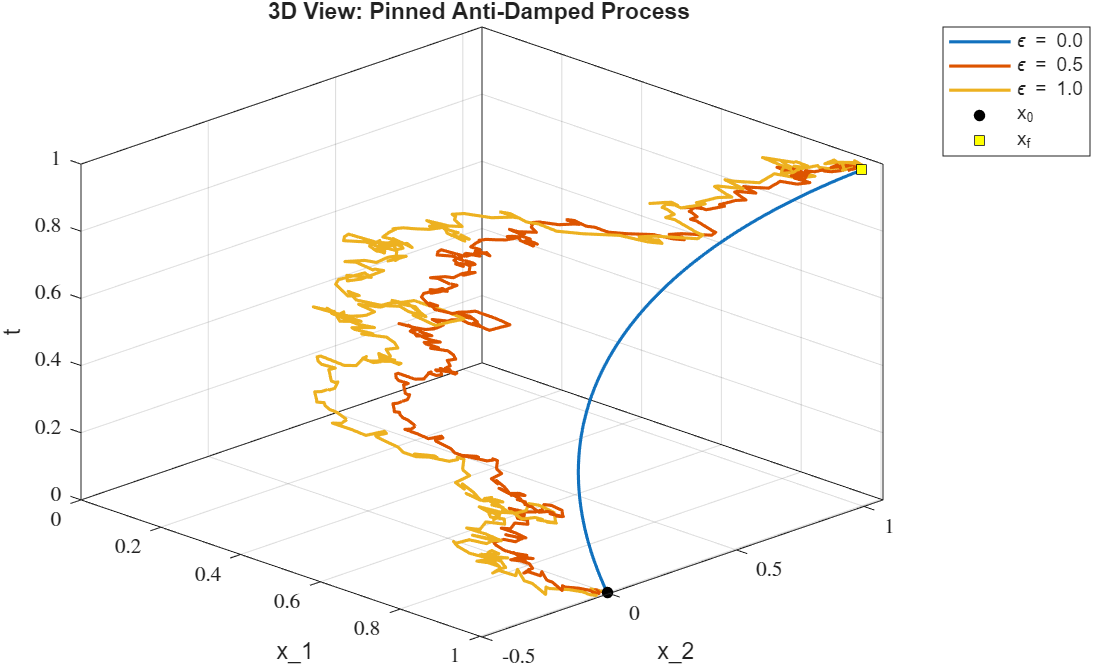}
        \caption{Spatio-temporal evolution of the anti-damped rotational process in $(x_1,x_2,t)$-plane.}
        \label{fig:ad_3D}
    \end{subfigure}
    \caption{Interpolation of fixed endpoints $x_0=[1,0]$ and $x_f=[1,1]$ by averaged of linear systems in~\eqref{eq:stochastic ensemble of systems} characterized by the parameters in~\eqref{eq: 2D_anti-damped_parameters}. 
    Compared with the Ornstein-Uhlenbeck case, trajectories exhibit outward spiralling and non-reverting behavior.}
    \label{fig:anti_damped_interpolation}
\end{figure}

Figure~\ref{fig:ou_interpolation} and Figure~\ref{fig:anti_damped_interpolation} show the interpolations of the averaged of a controlled Ornstein–Uhlenbeck and an anti-damped processes over the time horizon $[0,1]$ with minimum $L_2$-norm.
Figure~\ref{fig:ou_interpolation} corresponds to the two-dimensional case, with parameters:
\begin{equation}\label{eq: 2D-Ornstein-Uhelbeck_parameters}
A(\theta)=\begin{bmatrix}
    0&-\theta\\
    \theta&0
\end{bmatrix}\quad\text{and}\quad B(\theta)=\begin{bmatrix}
    1&0\\
    0&1
\end{bmatrix}.    
\end{equation}
Figure~\ref{fig:anti_damped_interpolation} correspond to the two-dimensional case, with parameters:
\begin{equation}\label{eq: 2D_anti-damped_parameters}
A(\theta) = 
\begin{bmatrix}
\sin(\theta) & \cos(\theta) \\
-\cos(\theta)& \sin(\theta)
\end{bmatrix}, \quad 
B(\theta) = 
\begin{bmatrix}
    0&-\theta\\
    \theta& 0
\end{bmatrix}    
\end{equation}

\section{Averaged Controllability between General Distributions}\label{sec: Averaged Controllability between General Distributions}
Our aim in this section is to use the deterministic and stochastic interpolations in~\eqref{eq: averaged interpolation} and~\eqref{eq: averaged stochastic interpolation} to design a control that steers a given initial distribution $\mu_0\in \mathcal{P}(\R^d)$ to a target distribution $\mu_f\in\mathcal{P}(\R^d)$. Throughout, we assume that $\mu_0$ and $\mu_f$ are independent distributions. 


\begin{problem}\label{problem: general stochastic}
Let $\epsilon\geq 0$. Given the pair $(\mu_0,\mu_f)\in\mathcal{P}(\R^d)\times\mathcal{P}(\R^d)$, find a parameter-independent control process $u\in L^2([0,t_f];\mathbb{R}^m)$ that steers the ensemble of states in~\eqref{eq:stochastic ensemble of systems} from $X(0,\theta)\sim\mu_0$, for all $\theta\in[0,1]$, to $\int_0^1X(t_f,\theta)d\theta\sim\mu_f$. 
\end{problem}

For the noiseless case, we have the following result:
\begin{theorem}\label{thm: deterministic control}
Let $\epsilon=0$.  Suppose $G_{t_f,0}$ in~\eqref{eq: Gramian} is invertible. Then a candidate control that solves Problem~\ref{problem: general stochastic} is characterized as the process
\begin{equation}\label{eq: proposed noiseless control process}
u(t)=u^z(t),    
\end{equation}
where $u^z(t)$ is defined in~\eqref{eq: averaged control} and $z=(x_0,x_f)\sim\mu_0\otimes\mu_f$.
\end{theorem}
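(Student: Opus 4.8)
The plan is to reduce the statement entirely to the pointwise (fixed-endpoint) result of Proposition~\ref{prop: Interpolation of determinsstic case} by conditioning on the random pair $z=(x_0,x_f)$ and then pushing the conditional law of the terminal averaged state forward under $\mu_0\otimes\mu_f$. The only things that genuinely need checking beyond Proposition~\ref{prop: Interpolation of determinsstic case} are that the randomized control is a well-defined, $\theta$-independent element of $L^2([0,t_f];\mathbb{R}^m)$ and that the conditioning step is legitimate.

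First I would record well-posedness. By~\eqref{eq: convolution function}, $\Phi(t_f,\cdot)$ is continuous on $[0,t_f]$, since $A,B\in C([0,1];\cdot)$ and the matrix exponential is smooth; hence, under the standing hypothesis that $G_{t_f,0}$ in~\eqref{eq: Gramian} is invertible, the map $t\mapsto u^z(t)=\Phi(t_f,t)^{\mathrm{T}}G_{t_f,0}^{-1}\!\left(x_f-\left(\int_0^1 e^{A(\theta)t_f}d\theta\right)x_0\right)$ from~\eqref{eq: averaged control} is continuous, hence bounded, hence in $L^2([0,t_f];\mathbb{R}^m)$, and it does not depend on $\theta$. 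Moreover $u^z(t)$ is affine in $(x_0,x_f)$ with $t$-dependent coefficients that do not depend on $z$, so $(z,t)\mapsto u^z(t)$ is jointly measurable; the same holds for $(z,\theta,t)\mapsto X^z(t,\theta)$ through~\eqref{eq: Ensemble interpolation}. Thus, sampling $z\sim\mu_0\otimes\mu_f$ and setting $u:=u^z$ produces a bona fide random element of $L^2([0,t_f];\mathbb{R}^m)$ that is admissible for Problem~\ref{problem: general stochastic}.

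Next I would carry out the conditioning. Let $z=(x_0,x_f)\sim\mu_0\otimes\mu_f$ and let $X:=X^z$ be the corresponding solution of~\eqref{eq:stochastic ensemble of systems} with $\epsilon=0$ under the control $u=u^z$. By construction $X(0,\theta)=x_0$ for every $\theta\in[0,1]$, so $X(0,\theta)\sim\mu_0$, the first marginal of $\mu_0\otimes\mu_f$. For $\mu_0\otimes\mu_f$-a.e.\ realization of $z$, the pair $(x_0,x_f)$ is a fixed point of $\R^d\times\R^d$, and Proposition~\ref{prop: Interpolation of determinsstic case} (equivalently, evaluating~\eqref{eq: Ensemble interpolation} at $t=t_f$ and using $\int_0^1\int_0^{t_f} e^{A(\theta)(t_f-\tau)}B(\theta)\Phi(t_f,\tau)^{\mathrm{T}}\,d\tau\,d\theta=G_{t_f,0}$) gives $\int_0^1 X(t_f,\theta)\,d\theta=x_f$ identically. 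Hence the conditional law of the terminal averaged state $\int_0^1 X(t_f,\theta)\,d\theta$ given $z$ is the Dirac mass $\delta_{x_f}$, and integrating against the law of $z$,
\[
\mathrm{Law}\!\left(\int_0^1 X(t_f,\theta)\,d\theta\right)=\int_{\R^d\times\R^d}\delta_{x_f}\,d(\mu_0\otimes\mu_f)(x_0,x_f)=\int_{\R^d}\delta_{x_f}\,d\mu_f(x_f)=\mu_f.
\]
Together with $X(0,\theta)\sim\mu_0$ this is exactly the assertion of Problem~\ref{problem: general stochastic}, so $u=u^z$ is a solution.

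I do not expect a serious obstacle: the substantive work is already done in Proposition~\ref{prop: Interpolation of determinsstic case}, and the lift to distributions is a conditioning/pushforward argument. The two points warranting care are (i) the joint measurability of $z\mapsto u^z$, so that the randomized open-loop control is a genuine $L^2$-valued random element, and (ii) the conceptual caveat that $u^z$ depends on the realized target sample $x_f$ and on the chosen coupling (here the independent coupling $\mu_0\otimes\mu_f$): it is an open-loop control correlated with the endpoint pair, not a state feedback, which is precisely why the theorem calls it a \emph{candidate} control and why a later implementation needs a deterministic pairing between $\mu_0$ and $\mu_f$.
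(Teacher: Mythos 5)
Your proposal is correct and follows essentially the same route as the paper: both reduce the claim to the fixed-endpoint result of Proposition~\ref{prop: Interpolation of determinsstic case} and then observe that the law of the terminal averaged state is the pushforward of $\mu_0\otimes\mu_f$ under the map $(x_0,x_f)\mapsto T_{t_f}(x_0,x_f)=x_f$, i.e., the second marginal $\mu_f$. Your added measurability/admissibility checks and the remark on the role of the coupling are sound refinements of the same argument rather than a different proof.
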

\begin{proof}
Let $z=(x_0,x_f)\sim\mu_0\otimes\mu_f$,  then the distribution flow $P_t$ of the noiseless averaged process 
\begin{equation}\label{eq: noiseless averaged process}
x(t)=\left(\int_0^1e^{A(\theta)t}d\theta\right) x_0+\left(\int_{0}^{t}\Phi(t,\tau)u(\tau)d\tau\right) 
\end{equation}
is 
\begin{equation}\label{eq: distribution with dirac}
P_t(A)=\int_{\R^d\times\R^d}\mathbbm{1}_A\left(\left(\int_0^1e^{A(\theta)t}d\theta\right) x_0+\left(\int_{0}^{t}\Phi(t,\tau)u(\tau)d\tau\right)\right)d\mu_0(x_0)d\mu_f(x_f),    
\end{equation}
for all measurable sets $A\subset\R^d\times\R^d$. Let 
\[
T_t(x_0,x_f):=\left(\int_0^1e^{A(\theta)t}d\theta\right) x_0+\left(\int_{0}^{t}\Phi(t,\tau)u(\tau)d\tau\right). 
\]
If~\eqref{eq: proposed noiseless control process} holds, then 
\[
T_t(x_0,x_f)=\left(\int_0^1e^{A(\theta)t}d\theta\right) x_0+\left(\int_{0}^{t}\Phi(t,\tau)\Phi(t_f,\tau)^{\mathrm{T}}d\tau\right) G_{t_f,0}^{-1}\left(x_f-\left(\int_{0}^{1}e^{A(\theta)t_f}d\theta\right) x_0\right)
\]
invertible and hence~\eqref{eq: distribution with dirac} is equivalent to 
\begin{equation}\label{eq: distribution flow}
P_t(A)=(\mu_0\otimes\mu_f)(T_{t}^{-1})(A).
\end{equation}
From Proposition~\ref{prop: Interpolation of determinsstic case}, since
\[
T_0(x_0,x_f)=x_0\quad\text{and}\quad T_{t_f}(x_0,x_f)=x_f
\]
hold, we have that  the control process in~\eqref{eq: proposed noiseless control process} \emph{generates} the probability flow~\eqref{eq: distribution with dirac} that interpolates between the given marginals  
\[
P_0=\mu_0\quad\text{and}\quad P_{t_f}=\mu_f.
\]
This completes the proof.
\end{proof}

For the noisy case, we consider the stochastic bridge $x_{\epsilon}^z$ defined in~\eqref{eq: averaged stochastic interpolation} satisfying the marginal constraints:
\begin{equation}\label{eq: marginal bridge}
x_{\epsilon}^z(0)=x_0\sim\mu_0\quad\text{and}\quad x_{\epsilon}^z(t_f)=x_f\sim\mu_f.  \end{equation}
We solve Problem~\ref{problem: general stochastic} by finding a control in the system~\eqref{eq:stochastic ensemble of systems} so that the probability law of the averaged process $\left(x(t)=\int_0^1X(t,\theta)d\theta\right)_{t\in[0,t_f]}$ equilibrates that of the probability law of stochastic bridge $(x_{\epsilon}^z(t))_{t\in[0,t_f]}$ with marginal constraints~\eqref{eq: marginal bridge}. The following result provides a candidate of the control that solves Problem~\ref{problem: general stochastic}
\begin{theorem}
Let $\epsilon>0$ and $\mathcal{F}_t=\sigma(W(s); 0\leq s\leq t)$ be the filtration generated by the Brownian motion.  Suppose $G_{t_f,t}$, for all $0\leq t<t_f$, in~\eqref{eq: Gramian} is invertible. Then the control that solves Problem~\ref{problem: general stochastic} is characterized as
\begin{equation}\label{eq: proposed control}
u_{\epsilon}(t)=\mathbb{E}(u_{\epsilon}^z(t)|\mathcal{F}_t)    
\end{equation}
where $u_{\epsilon}^z$ where is defined in~\eqref{eq: optimal stochastic control} be the corresponding control and the expectation is taken over the distribution of the random variable $z=(x_0,x_f)\sim\mu_0\otimes\mu_f$ given the filtration $\mathcal{F}_t$.
\end{theorem}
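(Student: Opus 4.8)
\emph{Strategy.} The plan is to run the flow-matching marginalization argument but, because the averaged dynamics of~\eqref{eq:stochastic ensemble of systems} is a Volterra process with memory—so the Fokker--Planck route used in the Markov case of~\cite{YM-MA-AT-YC:24} is unavailable—to replace the continuity-equation step by a direct comparison of conditional (transition) distributions, in the same spirit as the push-forward identity~\eqref{eq: distribution flow} in the proof of Theorem~\ref{thm: deterministic control}.

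\emph{Step 1 (reference mixture of bridges).} Draw $z=(x_0,x_f)\sim\mu_0\otimes\mu_f$ and let $(x_\epsilon^z(t))_{t\in[0,t_f]}$ be the conditional bridge~\eqref{eq: averaged stochastic interpolation}, which—by the stochastic-bridge proposition (cf.~\cite{DOA-YC:23})—is the averaged state process of~\eqref{eq:stochastic ensemble of systems} under control~\eqref{eq: optimal stochastic control} and satisfies the endpoint constraints~\eqref{eq: marginal bridge}. Hence the mixture process $t\mapsto x_\epsilon^z(t)$, with $z$ random, has one-time marginals $\rho_t$ obeying $\rho_0=\mu_0$ and $\rho_{t_f}=\mu_f$. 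It therefore suffices to show that the averaged state process of~\eqref{eq:stochastic ensemble of systems}, with control $u_\epsilon(t)=\mathbb{E}(u_\epsilon^z(t)\mid\mathcal{F}_t)$, has the same law as this mixture—in particular, the same terminal marginal $\mu_f$.

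\emph{Step 2 (common Volterra form and transition distributions).} Substituting~\eqref{eq: optimal stochastic control} into the variation-of-constants formula for the averaged dynamics and interchanging integrations (stochastic Fubini for the kernels $\Phi(t,\tau)$ and $G_{t_f,\tau}^{-1}$) rewrites~\eqref{eq: averaged stochastic interpolation} as
\[
x_\epsilon^z(t)=\Bigl(\int_0^1 e^{A(\theta)t}\,d\theta\Bigr)x_0+\int_0^t\Phi(t,\tau)\,u_\epsilon^z(\tau)\,d\tau+\sqrt{\epsilon}\int_0^t\Phi(t,\tau)\,dW(\tau),
\]
and the averaged state under~\eqref{eq: proposed control} has the identical representation with $u_\epsilon^z(\tau)$ replaced by $\mathbb{E}(u_\epsilon^z(\tau)\mid\mathcal{F}_\tau)$. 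Both trajectories being affine in $z$ and in the noise increments, their laws are fixed by the conditional (transition) distributions $\mathrm{Law}(x(t)\mid\mathcal{F}_s)$, $0\le s<t\le t_f$. Conditioning on $\mathcal{F}_t$ and invoking the tower property, $\mathbb{E}(u_\epsilon^z(\tau)\mid\mathcal{F}_\tau)=\mathbb{E}(\mathbb{E}(u_\epsilon^z(\tau)\mid\mathcal{F}_\tau,z)\mid\mathcal{F}_\tau)$, one expresses the transition kernel of the controlled process as the average—over the conditional law of $z$ given $\mathcal{F}_\tau$—of the transition kernels of the bridges $x_\epsilon^z$. Since $x_\epsilon^z$ is pinned at $x_f$, this conditional law resolves $x_f$ as $\tau\uparrow t_f$, so the averaged kernel returns $\mathrm{Law}(x(t_f))=\int\delta_{x_f}\,d\mu_f(x_f)=\mu_f$, while $\mathrm{Law}(x(0))=\mu_0$ is immediate; this is Problem~\ref{problem: general stochastic}. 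When $\epsilon\downarrow0$, $u_\epsilon^z$ becomes $\mathcal{F}_t$-measurable with no residual randomness given $\mathcal{F}_t$, the conditional expectation trivializes, and the argument degenerates to Theorem~\ref{thm: deterministic control}.

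\emph{Main obstacle.} Everything delicate sits in Step 2, in the part with no Markov analogue. Absent a time-local density to propagate, the equality of laws must be verified for the full, history-dependent distribution; in particular one must (i) identify $\mathcal{F}_t$ with the filtration of the controlled trajectory—equivalently, realize the driving noise $W$ of~\eqref{eq:stochastic ensemble of systems} as the innovation process of the averaged state—so that the terminal point $x_f$ entering $u_\epsilon^z$ is progressively revealed and the terminal law is $\mu_f$ rather than a degenerate push-forward of $\mu_0$ through $\int_0^1 e^{A(\theta)t_f}\,d\theta$; and (ii) justify the stochastic- and conditional-Fubini interchanges and the innovation representation for the Volterra kernels $\Phi(t,\tau)$ and $G_{t_f,\tau}^{-1}$, which rest on $u\in L^2([0,t_f];\R^m)$ and on the standing invertibility of $G_{t_f,t}$ for every $t<t_f$. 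Once the transition-distribution identity is in hand, the endpoint marginals—and hence the theorem—follow immediately.
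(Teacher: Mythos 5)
Your proposal is correct and follows essentially the same route as the paper: both bypass the Fokker--Planck machinery of the Markov case and instead compare the law of the controlled averaged process with that of the bridge mixture directly through their (transition) distributions, with the tower property $\mathbb{E}\bigl(\mathbb{E}(u_{\epsilon}^z(\tau)\mid\mathcal{F}_{\tau})\bigr)=\mathbb{E}\bigl(u_{\epsilon}^z(\tau)\bigr)$ doing the real work and the pinned endpoints~\eqref{eq: marginal bridge} delivering $\mu_0$ and $\mu_f$. The paper phrases this as matching the Gaussian means~\eqref{eq: controlled mean 1} and~\eqref{eq: conditional mean of stochastic bridge} together with the common covariance $\epsilon G_{t,0}$, whereas you phrase it as averaging the bridge transition kernels over the conditional law of $z$; these are the same argument.
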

\begin{remark}
We state here that $\mathcal{F}_t=\sigma(W(s); 0\leq s\leq t)=\sigma(x_{\epsilon}^z(s); 0\leq s\leq t)$, where $x_{\epsilon}^z(t)$ is the Volterra process with memory characterized in~\eqref{eq: averaged stochastic interpolation}. Therefore, under the condition $A(\theta)=A$, since the Volterra process in~\eqref{eq: averaged stochastic interpolation} reduces to the memoryless process in~\eqref{eq: Markov_state_process}, we have that the control process in~\eqref{eq: opt_pinn_con} is a Markov process. Thus if $x_{\epsilon}(t)=x$ a.s then, we have that control $u^*_{\epsilon}(t)=u^*_{\epsilon}(t,x)$ where
\begin{align}\label{eq: cond_opt_pinn_con}
u^*_{\epsilon}(t,x)=-\tilde{B}^{\mathrm{T}}e^{A^{\mathrm{T}}(t_f-t)}G_{t_f,t}^{-1}\left(\left(e^{A(t_f-t)}\right)x-x_f\right).
\end{align}
Hence~\eqref{eq: proposed control} reduces to $u_{\epsilon}(t)=u_{\epsilon}(t,x)$
where
\[
u_{\epsilon}(t,x)=\mathbb{E}(u_{\epsilon}^z(t,x)| x_{\epsilon}(t)=x).
\]
which coincides with the result in~\cite{YM-MA-AT-YC:24}. 
\end{remark}
\begin{proof}

 Firstly, for a fixed $z=(x_0,x_f)$, we show that the probability law of the controlled process 
\begin{equation}\label{eq: output_with_IC}
 x_{\epsilon}(t)=\left(\int_0^1e^{A(\theta)t}d\theta\right) x_0+\int_0^t \Phi(t,\tau)(u_{\epsilon}(\tau)d\tau +\sqrt{\epsilon} dW(\tau)),    
\end{equation}
is equal to the probability law of the stochastic bridge $x_{\epsilon}^z(t)$ in~\eqref{eq: averaged stochastic interpolation}. To this end,  using~\eqref{eq: proposed control}, the probability distribution of the process~\eqref{eq: output_with_IC} is a Gaussian distribution with mean
\begin{equation}\label{eq: controlled mean 1}
m_{\epsilon}(t)=\left(\int_0^1e^{A(\theta)t}d\theta\right) x_0+\mathbb{E}\int_0^t \Phi(t,\tau)\mathbb{E}(u_{\epsilon}^z(\tau)|\mathcal{F}_{\tau})d\tau    
\end{equation}
with covariance matrix $\epsilon G_{t,0}$. Also, the probability distribution of the process~\eqref{eq: averaged stochastic interpolation} is a Gaussian distribution with mean
\begin{equation}\label{eq: mean of stochastic bridge}
m_{\epsilon}^z(t)=\left(\int_0^1e^{A(\theta)t}d\theta\right) x_0+\mathbb{E}\int_0^t \Phi(t,\tau)u_{\epsilon}^z(\tau)d\tau    
\end{equation}
with the same covariance matrix $\epsilon G_{t,0}$. However, from~\eqref{eq: optimal stochastic control}, using the tower property of conditional expectation, we have that the mean value at time $t$ in~\eqref{eq: mean of stochastic bridge} is the same as  
\begin{equation}\label{eq: conditional mean of stochastic bridge}
m_{\epsilon}^z(t)=\left(\int_0^1e^{A(\theta)t}d\theta\right) x_0+\mathbb{E}\int_0^t \Phi(t,\tau)\mathbb{E}(u_{\epsilon}^z(\tau)|\mathcal{F}_{\tau})d\tau.    
\end{equation}
This implies that both processes in~\eqref{eq: output_with_IC}, under the control~\eqref{eq: proposed control} and~\eqref{eq: averaged stochastic interpolation} have the same transition probabilities. 

Therefore, following from~\eqref{eq: marginal bridge}, we conclude that the probability law of $ x^z(t)$ is equal to the probability law of $x(t|x_0)$ for all $t \in [0, t_f]$. In particular, $x(t_f|x_0)\sim\mu_f$. This concludes the proof.
\end{proof}

\vspace{-9.0mm}

\section{Gaussian and mixture Gaussian initial and target distribution}\label{sec: Gaussian and mixture Gaussian initial and target distribution}
For the noiseless case, we have provided an explicit formula~\eqref{eq: proposed noiseless control process} for arbitrary initial distribution $\mu_0\in\mathcal{P}(\R^d)$ and target distribution $\mu_f\in\mathcal{P}(\R^d)$. The goal of this section is to derive an analytical formula for the noisy control in~\eqref{eq: proposed control} when the initial and target distributions are either Gaussian or Gaussian mixture distributions. As mentioned in the introduction, the choice of a Gaussian mixture for the target distribution is motivated by its relevance to flow matching.

Consider
\begin{equation}\label{eq: special distributions}
\mu_0=\mathcal{N}(m_0,\Sigma_0)\quad\text{and}\quad \mu_f=\sum_{i=1}^L\omega_i\mathcal{N}(m_i,\Sigma_i)    
\end{equation}
where $\sum_{i=1}^L\omega_i=1$. Given $x_0$ and $W(\tau)$, where $0\leq\tau\leq t$, since 
\begin{multline}\label{eq: noisy control}
\mathbb{E}(u_{\epsilon}^z(t)|\mathcal{F}_t)\\=-\sqrt{\epsilon}\int_0^t\Phi(t_f,t)^T G_{t_f,\tau}^{-1}\Phi(t_f,\tau) dW(\tau)+\Phi(t_f,t)^{T}G_{t_f,0}^{-1}\left(\mathbb{E}(x_f|\mathcal{F}_t)-\left(\int_{0}^{1}e^{A(\theta)t_f}d\theta\right) x_0\right),    
\end{multline}
our goal reduces to finding the formula for $\mathbb{E}(x_f|\mathcal{F}_t)$, where at time $t$ the $\sigma$-algebra $\mathcal{F}_t$ is determined by~\eqref{eq: averaged stochastic interpolation} which we rearrange to the form:
\begin{equation}\label{eq: rearranged averaged process}
 x_{\epsilon}^z(t)=Y(t)x_0+Z(t)x_f+R_{\epsilon}(t),
\end{equation}
where
\[
Y(t)=\left(\int_0^1e^{A(\theta)t}d\theta\right)-\left(\int_0^t\Phi(t,\tau)\Phi(t_f,\tau)^{T}d\tau\right) G_{t_f,0}^{-1}\left(\int_{0}^{1}e^{A(\theta)t_f}d\theta\right)
\]
and
\[
Z(t)=\left(\int_0^t\Phi(t,\tau)\Phi(t_f,\tau)^{T}d\tau\right) G_{t_f,0}^{-1}
\]
are deterministic functions and
\[
R_{\epsilon}(t):=\sqrt{\epsilon}\int_0^t\Phi(t,\tau)\left(dW(\tau)-\int_0^{\tau}\Phi(t_f,\tau)^T G_{t_f,s}^{-1}\Phi(t_f,s) dW(s)d\tau \right).
\]
is a noisy process. Since at time $t$, we have that $W(\tau)$, where $0\leq\tau\leq t$ is given, we have that $R_{\epsilon}(t)$ is a Gaussian memory process with mean 
\[
\mathbb{E}(R_{\epsilon}(t)):=-\sqrt{\epsilon}\int_0^t\int_0^{\tau}\Phi(t,\tau)\Phi(t_f,\tau)^T G_{t_f,s}^{-1}\Phi(t_f,s) dW(s)d\tau
\]
and covariance matrix $\epsilon G_{t,0}$.
We proceed to the following result.
\begin{theorem}
Consider Problem~\ref{problem: general stochastic} where $\mu_0$ and $\mu_f$ are given in~\eqref{eq: special distributions}. Then a candidate that solves Problem~\ref{problem: general stochastic} is~\eqref{eq: noisy control} where
\begin{equation}\label{eq: calculation of the conditional expectation}
\mathbb{E}(x_f|\mathcal{F}_t)=\frac{1}{\sum_{i=1}^L\tilde{\omega}_i(t)}\sum_{i=1}^L\tilde{\omega}_i(t)(m_i+\Gamma_i(t)\left(x^z(t)-\chi_i(t)\right)    
\end{equation}
with 
\begin{align*}
\tilde{\omega}_i(t):=&\omega_i\exp{\left(-\frac{1}{2}\left(x^z(t)-\chi_i(t)\right)^{\mathrm{T}}\left(Y(t)\Sigma_0 Y(t)^{\mathrm{T}}+Z(t)\Sigma_i Z(t)^{\mathrm{T}}+\epsilon G_{t,0}\right)^{-1}\left(x^z(t)-\chi_i(t)\right)\right)},\\
\Gamma_i(t):=&\Sigma_iZ(t)^{\mathrm{T}}\left(Y(t)\Sigma_0 Y(t)^{\mathrm{T}}+Z(t)\Sigma_i Z(t)^{\mathrm{T}}+\epsilon G_{t,0}\right)^{-1},\\
\chi_i(t):=&\left(Y(t)m_0+Z(t)m_i+\mathbb{E}(R_{\epsilon}(t))\right).
\end{align*}
\end{theorem}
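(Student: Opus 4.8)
The plan is to compute the conditional expectation $\mathbb{E}(x_f \mid \mathcal{F}_t)$ directly by exploiting the fact that, conditioned on $\mathcal{F}_t$ (equivalently, on the path $(W(s))_{0\le s\le t}$), the observable $x_\epsilon^z(t)$ is an \emph{affine-Gaussian} function of the unknown endpoints $z=(x_0,x_f)$. Concretely, I would start from the rearranged form~\eqref{eq: rearranged averaged process}, $x_\epsilon^z(t)=Y(t)x_0+Z(t)x_f+R_\epsilon(t)$, and observe that given $\mathcal{F}_t$ the term $R_\epsilon(t)$ has a known (deterministic, $\mathcal{F}_t$-measurable) mean $\mathbb{E}(R_\epsilon(t))$ and covariance $\epsilon G_{t,0}$, while $(x_0,x_f)\sim\mu_0\otimes\mu_f$ is independent of $W$. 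Thus conditioning on $\mathcal{F}_t$ reduces to a static Bayesian estimation problem: we observe $x^z(t)$, a linear-Gaussian corruption of the pair $(x_0,x_f)$, and want the posterior mean of $x_f$.

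The key steps, in order, are: (1) Work conditionally on one mixture component $i$, i.e. pretend $x_f\sim\mathcal{N}(m_i,\Sigma_i)$ independently of $x_0\sim\mathcal{N}(m_0,\Sigma_0)$. Then $(x_0,x_f,x^z(t))$ is jointly Gaussian; marginalizing $x_0$, the pair $(x_f, x^z(t))$ is Gaussian with $\mathbb{E}[x^z(t)\mid\text{comp }i]=Y(t)m_0+Z(t)m_i+\mathbb{E}(R_\epsilon(t))=:\chi_i(t)$, $\mathrm{Cov}(x^z(t)\mid\text{comp }i)=Y(t)\Sigma_0Y(t)^{\mathrm T}+Z(t)\Sigma_iZ(t)^{\mathrm T}+\epsilon G_{t,0}=:S_i(t)$, and cross-covariance $\mathrm{Cov}(x_f,x^z(t))=\Sigma_iZ(t)^{\mathrm T}$. (2) Apply the standard Gaussian conditioning formula to get $\mathbb{E}[x_f\mid x^z(t),\text{comp }i]=m_i+\Sigma_iZ(t)^{\mathrm T}S_i(t)^{-1}(x^z(t)-\chi_i(t))=m_i+\Gamma_i(t)(x^z(t)-\chi_i(t))$. (3) Compute the posterior mixture weights: by Bayes' rule the posterior probability of component $i$ given $x^z(t)$ is proportional to $\omega_i$ times the Gaussian density $\mathcal{N}(x^z(t);\chi_i(t),S_i(t))$; dropping the common normalizing constant $(2\pi)^{-d/2}$ (the $\det S_i(t)^{-1/2}$ factor should in principle also appear — I would either absorb it into $\tilde\omega_i$ or note it cancels under an additional homoscedasticity assumption) leaves the exponential factor displayed as $\tilde\omega_i(t)$. (4) Combine via the tower property / law of total expectation over the discrete component label: $\mathbb{E}(x_f\mid\mathcal{F}_t)=\sum_i \mathbb{P}(\text{comp }i\mid\mathcal{F}_t)\,\mathbb{E}[x_f\mid x^z(t),\text{comp }i]=\frac{1}{\sum_j\tilde\omega_j(t)}\sum_i\tilde\omega_i(t)(m_i+\Gamma_i(t)(x^z(t)-\chi_i(t)))$, which is~\eqref{eq: calculation of the conditional expectation}. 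Finally substitute into~\eqref{eq: noisy control} to identify the control.

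The main obstacle is the bookkeeping around what exactly $\mathcal{F}_t$ conditioning \emph{retains}: since $\mathcal{F}_t=\sigma(W(s):s\le t)$ and $x_0$ is also random, the clean statement "$x^z(t)$ is the only relevant observation" must be justified — I would argue that $(W(s))_{s\le t}$ enters the posterior for $x_f$ only through the sufficient statistic $x^z(t)$ because $R_\epsilon(t)$, though a functional of the whole path $(W(s))_{s\le t}$, is conditionally a \emph{constant} given $\mathcal{F}_t$, so the randomness that must be "inverted" to learn $x_f$ is only that of $(x_0,x_f)$ against the single linear equation~\eqref{eq: rearranged averaged process}. A secondary subtlety is the determinant factor in the mixture weights noted in step (3); the statement as written suppresses it, so I would flag that this formula is exact up to that factor (or exact under equal-covariance components), rather than grinding through a derivation that contradicts the displayed $\tilde\omega_i(t)$. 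The jointly-Gaussian conditioning in steps (1)–(2) and the Bayes computation in (3) are then entirely routine linear algebra.
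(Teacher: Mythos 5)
Your derivation is correct and, for the base case $L=1$, coincides exactly with the paper's own proof: the paper likewise applies the jointly Gaussian conditioning formula $\mathbb{E}(x_f\mid\mathcal{F}_t)=\mathbb{E}(x_f)+\mathrm{Cov}(x_f,x^z(t))\,\mathrm{Cov}(x^z(t),x^z(t))^{-1}(x^z(t)-\mathbb{E}(x^z(t)))$ to the rearranged representation~\eqref{eq: rearranged averaged process}, obtaining precisely your $m_f+\Gamma(t)(x^z(t)-\chi(t))$. Where you go genuinely further is the mixture case $L>1$: the paper disposes of it in one sentence by citing~\cite{YM-MA-AT-YC:24}, whereas you actually carry out the latent-label / posterior-weight computation via Bayes' rule and the law of total expectation. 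In doing so you correctly catch a point the paper glosses over: the posterior probability of component $i$ is proportional to $\omega_i\det(S_i(t))^{-1/2}\exp(\cdots)$ with $S_i(t)=Y(t)\Sigma_0Y(t)^{\mathrm T}+Z(t)\Sigma_iZ(t)^{\mathrm T}+\epsilon G_{t,0}$, and since $S_i(t)$ depends on $\Sigma_i$ these determinant factors do \emph{not} cancel across components in general, so the displayed $\tilde\omega_i(t)$ is exact only when the $S_i(t)$ share a common determinant (e.g., equal component covariances). Flagging this rather than silently reproducing the stated weights is the right call. Your discussion of why the whole history $(W(s))_{s\le t}$ should enter the posterior for $x_f$ only through the statistic $x^z(t)$ is likewise more careful than the paper, which identifies conditioning on $\mathcal{F}_t$ with conditioning on the single observation $x^z(t)$ without comment; that identification deserves the justification you sketch, since conditioning on $\sigma(W(s):s\le t)$ alone, with $(x_0,x_f)$ independent of $W$, would trivially return the unconditional mean of $x_f$.
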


\begin{proof}
Consider the special case where $\mu_0=\mathcal{N}(m_0,\Sigma_0)$ and $\mu_f=\mathcal{N}(m_f,\Sigma_f)$, then using the formula
\begin{equation*}
\mathbb{E}(x_f|\mathcal{F}_t)=\mathbb{E}(x_f)+\mathrm{Cov}(x_f,x^z(t))(\mathrm{Cov}(x^z(t),x^z(t)))^{-1}(x^z(t)-\mathbb{E}(x^z(t))),  
\end{equation*}
where $x^z(t)$ is in~\eqref{eq: rearranged averaged process}, we have that
\begin{align*}
\mathbb{E}(x_f|\mathcal{F}_t)= m_f+\Sigma_fZ(t)^{\mathrm{T}}\left(Y(t)\Sigma_0 Y(t)^{\mathrm{T}}+Z(t)\Sigma_f Z(t)^{\mathrm{T}}+\epsilon G_{t,0}\right)^{-1}\\ \left(x^z(t)-\left(Y(t)m_0+Z(t)m_f+\mathbb{E}(R_{\epsilon}(t))\right)\right).
\end{align*}
This concludes the formula in~\eqref{eq: calculation of the conditional expectation} for $L=1$. The generalization $L>1$ follows from~\cite{YM-MA-AT-YC:24}. This finishes the proof.
\end{proof}

\section{Flow Matching Algorithm and Numerical Results}\label{sec: Flow Matching Algorithm and Numerical Results}
The analytical determination of the conditional expectation in~\eqref{eq: proposed control} is generally intractable.  Consequently, we employ a flow matching approach to obtain a numerical approximation of~\eqref{eq: proposed control} by solving: 
\begin{multline}\label{eq: loss function of flow matching}
\min_{f_{\epsilon}\in \mathcal{F}}\int_0^{t_f}\mathbb{E}_{z\sim\mu_0\otimes\mu_f}\left(\left\|f_{\epsilon}\left(x_0,t,\sqrt{\epsilon}\int_{0}^{t}\Phi(t,\tau) d W(\tau)\right)-u_{\epsilon}^z(t)\right\|^2\right)dt\\\approx \min_{f_{\epsilon}\in \mathcal{F}}\frac{1}{N}\sum^N_{i=1}\int_0^{t_f}\left(\left\|f_{\epsilon}\left(x^i_0,t,\sqrt{\epsilon}\int_{0}^{t}\Phi(t,\tau) d W(\tau)\right)-u_{\epsilon}^{z^i}(t)\right\|^2\right)dt,
\end{multline}
where $\epsilon> 0$ 
The expectation is approximated using $N$ independent samples $z^i = (x^i_0, x^i_f) \sim \mu_0 \otimes \mu_f$, $i \in {1, \dots, N}$. 

In the case where $\epsilon=0$, by the orthogonal projection property of conditional expectation in $L^2$, the optimizer for minimum square error:
\[
u\left(x^i_0,t\right)=\underset{f_{0}\in \mathcal{F}}{\mathrm{argmin}}\mathbb{E}\left(\left\|f_{0}\left(x^i_0,t\right)-u_{0}^{z^i}(t)\right\|^2\right)dt,
\]
for all $i\in 1,\dots,N$, where $u_{0}^{z}:=u^z$ is defined in~\eqref{eq: averaged control} is characterized as the conditional mean
\[
u\left(x^i_0,t\right)= \mathbb{E}(u_{0}^{z^i}(t)|x_0^i,t)
\]
(see e.g.,\cite[pp~85]{DW:91} or~\cite[pp~475]{PB:95}) and hence simplifies to
\begin{equation*}
u\left(x^i_0,t\right)= \Phi(t_f,t)^{\mathrm{T}} G_{t_f,0}^{-1}\left(\mathbb{E}_{x^i_f\sim\mu_f}(x^i_f)-\left(\int_{0}^{1}e^{A(\theta)t_f}d\theta\right) x_0^i\right),   
\end{equation*}
since $(x^i_0,x^i_f)\sim\mu_0\otimes\mu_f$. Therefore, an open-loop control $u(x_0,\cdot)$ trained against teacher controls $u_{0}^{z^i}(\cdot)$ using independent pairings $(x^i_0,x^i_f)$ cannot reproduce the desired spread and covariance of $\mu_f$. It only steers the state in~\eqref{eq: noiseless averaged process} from $\mu_0$ to its mean $\mathbb{E}_{x_f\sim\mu_f}(x_f)$. Here $x_f$ is a random variable taking variables in $\{x^i_f\}$ with probability $\frac{1}{N}$. 
To steer to the full $\mu_f$, for the case where $\epsilon=0$, we rather approximate via optimal transport coupling:
\begin{equation}\label{eq: Regression with OT_coupling}
\min_{f_{0}\in \mathcal{F}}\int_0^{t_f}\mathbb{E}_{z\sim\pi^{\star}}\left(\left\|f_{0}\left(x_0,t\right)-u_{0}^{z}(t)\right\|^2\right)dt.    
\end{equation}
Here $\pi^{\star}$ has marginal distribution $\mu_0$ and $\mu_f$ and is obtained by solving an optimal transport problem with quadratic cost (see Algorithm~\ref{alg:flow_matching_deterministic_ot_coupling}). In this case the optimizer in~\eqref{eq: Regression with OT_coupling} is characterized as:
\begin{equation}\label{eq: control_via_OT_coupling}
u(x_0,t)= \Phi(t_f,t)^{\mathrm{T}} G_{t_f,0}^{-1}\left(T(x_0)-\left(\int_{0}^{1}e^{A(\theta)t_f}d\theta\right) x_0\right).    
\end{equation}
Here $T(x_0):=\mathbb{E}_{\pi^{\star}}(x_f|x_0)=x_f$ is the transport map. In this case given $(x_0,T(x_0)=x_f)$ the control in~\eqref{eq: control_via_OT_coupling} steers the state in~\eqref{eq: noiseless averaged process} from $x_0\sim\mu_0$ to $x(t_f)=T(x_0)\sim\mu_f$. This gives us two ways to train the open-loop control. The first is to directly train against the teacher control $\Phi(t_f,t)^{\mathrm{T}} G_{t_f,0}^{-1}\left(T(x_0)-\left(\int_{0}^{1}e^{A(\theta)t_f}d\theta\right) x_0\right)$ after one obtains an optimal transport permutation. The second is to only train $K(t)$ against the gain matrix $\Phi(t_f,t)^{\mathrm{T}} G_{t_f,0}^{-1}$ and after multiply $u(x_0,t)\approx K(t)\left(T(x_0)-\left(\int_{0}^{1}e^{A(\theta)t_f}d\theta\right)x_0\right)$. While the first trains against a $d+1$-dimensional regression in space-time, the latter reduces the training to a one-dimensional regression in time.

In connection to Theorem~\ref{thm: deterministic control}, since $u=u^{z}$ yields equal distribution in their respective flows, this implies using an independent coupling, one must rather train $u(t,\Delta)$, where $\Delta := x_f-\left(\int_{0}^{1}e^{A(\theta)t_f}d\theta\right) x_0$, (even though not an open-loop control) to successfully drive the given initial distribution to the desired distribution. 

For both $\epsilon>0$ and $\epsilon=0$, the numerical computation is performed in two stages, the training stage and the prediction stage. For the \textbf{training stage}, in the case where $\epsilon>0$, the memory is very important. This motivates the use of an RNN or Transformer to take care of the memory.  In particular, for $N=1000$, we use an LSTM network with Adam optimizer and a piecewise learning rate schedule was trained over $100$ epochs ($1.262\times 10^5$ iterations). In the case where $\epsilon=0$, there is no reason to use such sophisticated network architecture.  In particular, for $N=1000$, we use an FNN network with two hidden layers (64 units each, trained with the scaled conjugate gradient algorithm) on deterministic optimal transport coupling between $\{x_0^i\}_{i=1}^{1000}$ to $\{x_f^i\}_{i=1}^{1000}$ sampled independently from $\mu_0$ and $\mu_f$ respectively. For the \textbf{prediction stage}, for the case where $\epsilon>0$, we first sample $\{x_0^i\}_{i=1}^{1000}$ and  use the Euler-Maruyama method, with $\triangle t=0.001$, to simulate $1000$ independent realization of the averaged process~\eqref{eq: output_with_IC} using the trained non-anticipating control law learned in the training stage. For the purpose of visualization we only show $500$ sample paths. For the case where $\epsilon=0$, we repeat the same process but use the trapezoid method for numerical integration to simulate the deterministic flow.




\begin{algorithm}[H]
\caption{Flow Matching for Stochastic Averaged Systems (Product Coupling)}
\label{alg:flow_matching}
\begin{algorithmic}[1]
 \Statex \textbullet~\textbf{Initialize given parameters:} 
\State Initial distribution $\mu_0$, target distribution $\mu_f$ (possibly a mixture of Gaussians)
\State System parameters $A(\theta)$, $B(\theta)$, $\epsilon> 0$ and $(W(t))_{0\leq t\leq t_f}$ in $$d X_{\epsilon}(t,\theta)=(A(\theta)X_{\epsilon}(t,\theta)+B(\theta)u_{\epsilon}(t))d t+\sqrt{\epsilon}B(\theta)d W(t)$$.
\Statex \textbullet~\textbf{Use $A(\theta)$ and $B(\theta)$ to compute relevant functions:}
\State Compute the following:
         \begin{equation*}
        \Phi(t,\tau) =\int_0^1e^{A(\theta)(t-\tau)}B(\theta)d\theta\quad\text{and}\quad G_{t_f,t}=\int_{t}^{t_f}\Phi(t_f,\tau)\Phi(t_f,\tau)^{\mathrm{T}}d\tau
    \end{equation*}
 \Statex \textbullet~\textbf{Generate Interpolating Distribution:}
\State \hspace{\algorithmicindent} Obtain sample pairs $z^i = (x^i_0, x^i_f) \sim \mu_0 \otimes \mu_f$, $i \in {1, \dots, N}$. For each $z^i = (x^i_0, x^i_f)$ pair, compute
the conditional stochastic feedforward control process $(u_{\epsilon}^{z^i}(t))_{0\leq t\leq t_f}$ using pre-computed functions in Step~3, $A(\theta)$ and $\epsilon> 0$ in step~1:
\begin{equation*}
  u_{\epsilon}^{z^i}(t)= -\sqrt{\epsilon}\int_0^t\Phi(t_f,t)^T G_{t_f,\tau}^{-1}\Phi(t_f,\tau) dW(\tau)+\Phi(t_f,t)^{T}G_{t_f,0}^{-1}\left(x^i_f-\left(\int_{0}^{1}e^{A(\theta)t_f}d\theta\right) x^i_0\right),   
 \end{equation*}
 where  $i \in {1, \dots, N}$.
\Statex \textbullet~\textbf{Use a Neural Network (e.g., an LSTM) to Learn the Control Law $u_{\epsilon}(t)$:}
\State \hspace{\algorithmicindent} 
Consider an appropriate neural network $\mathcal{F}$ and define a function class $f_{\epsilon}\in \mathcal{F}$ to approximate $u_{\epsilon}(t)=\mathbb{E}(u_{\epsilon}^z(t)|\mathcal{F}_t)$. Use the input samples $\left(x^i_0,t,\sqrt{\epsilon}\int_{0}^{t}\Phi(t,\tau) d W(\tau)\right)$, as the training data to train a function $f_{\epsilon} \in \mathcal{F}$ using the regression problem:
\begin{equation*}
u_{\epsilon}(t)\approx\underset{f_{\epsilon}\in \mathcal{F}}{\mathrm{argmin}}\frac{1}{N}\sum^N_{i=1}\int_0^{t_f}\left(\left\|f_{\epsilon}\left(x^i_0,t,\sqrt{\epsilon}\int_{0}^{t}\Phi(t,\tau) d W(\tau)\right)-u_{\epsilon}^{z^i}(t)\right\|^2\right)dt.
\end{equation*}
\Statex \textbullet~\textbf{Simulate the process:}
\State \hspace{\algorithmicindent} The trained control process $u_{\epsilon}(t)$ is used to steer the process:
\[
x_{\epsilon}(t)=\left(\int_0^1e^{A(\theta)t}d\theta\right) x_0+\int_0^t \Phi(t,\tau)(u_{\epsilon}(\tau)d\tau +\sqrt{\epsilon} dW(\tau)).
\]
\end{algorithmic}
\end{algorithm}

\begin{algorithm}[H]
\caption{Flow Matching for Deterministic Averaged Systems (OT Coupling)}
\label{alg:flow_matching_deterministic_ot_coupling}
\begin{algorithmic}[1]
\Statex \textbf{Inputs:} Initial law $\mu_0$, target law $\mu_f$; system families $A(\theta),B(\theta)$, $\theta\in[0,1]$; final time $t_f=1$.

\Statex\textbf{(I) OT coupling with quadratic cost}
\State Draw i.i.d.\ samples $\{x_0^i\}_{i=1}^N\sim\mu_0$ and $\{\tilde x_f^j\}_{j=1}^N\sim\mu_f$.
\State Form the cost matrix $C_{ij}=\|x_0^i-\tilde x_f^j\|^2$ and solve the assignment problem:
\[
\pi^\star \;=\;\arg\min_{\pi\in S_N}\;\sum_{i=1}^N C_{i,\pi(i)}.
\]
\State Set $x_f^i:=\tilde x_f^{\,\pi^\star(i)}$ for $i=1,\dots,N$ (this is the discrete OT pairing).

\Statex\textbf{(II) Teacher control}
\State For each $t_j\in[0,t_f]$ and $z^i=(x_0^i,x_f^i)$, define the teacher control:
\[
u^{z^i}(t_j)\;=\;\Phi(t_f,t_j)^{\!\top}\,G_{0,t_f}^{-1}\,(x_f^i - \left(\int_0^1e^{A(\theta)t_f}d\theta\right)\,x_0^i).
\]
\Statex\textbf{(III) Learning of the open-loop field}
\State Build the dataset on the grid $\{t_j\}$:
\[
\mathcal{D}
=\big\{\big([x_0^i,t_j],\;u^{z^i}(t_j)\big)\;\big|\; i=1,\dots,N,\;j=0,\dots,M\big\}.
\]
\State Fit $u_{\mathrm{train}}\in\mathcal{F}$ (e.g., a feedforward network) by least squares:
\[
u_{\mathrm{train}}(x_0^i,t_j)=\arg\min_{f\in\mathcal{F}} \frac{1}{N(M{+}1)}
\sum_{i=1}^N\sum_{j=0}^{M}\big\|\,f(x_0^i,t_j)-u^{z^i}(t_j)\,\big\|^2.
\]

\Statex\textbf{(IV) Deterministic rollout with the learned control}
\State For each $i= 1,\dots, N$, propagate:
\[
x^i(t_j)\;=\;\left(\int_0^1e^{A(\theta)t_j}d\theta\right)\,x_0^i
\;+\;\sum_{k=0}^{j}\Phi(t_j,t_k)\,u_{\mathrm{train}}(x_0^i,t_k)\Delta t.
\]
\end{algorithmic}
\end{algorithm}

\begin{figure}[h!]
    \centering
    \begin{subfigure}[t]{0.48\textwidth}
        \centering
        \includegraphics[width=\linewidth,height=5.5cm,keepaspectratio]{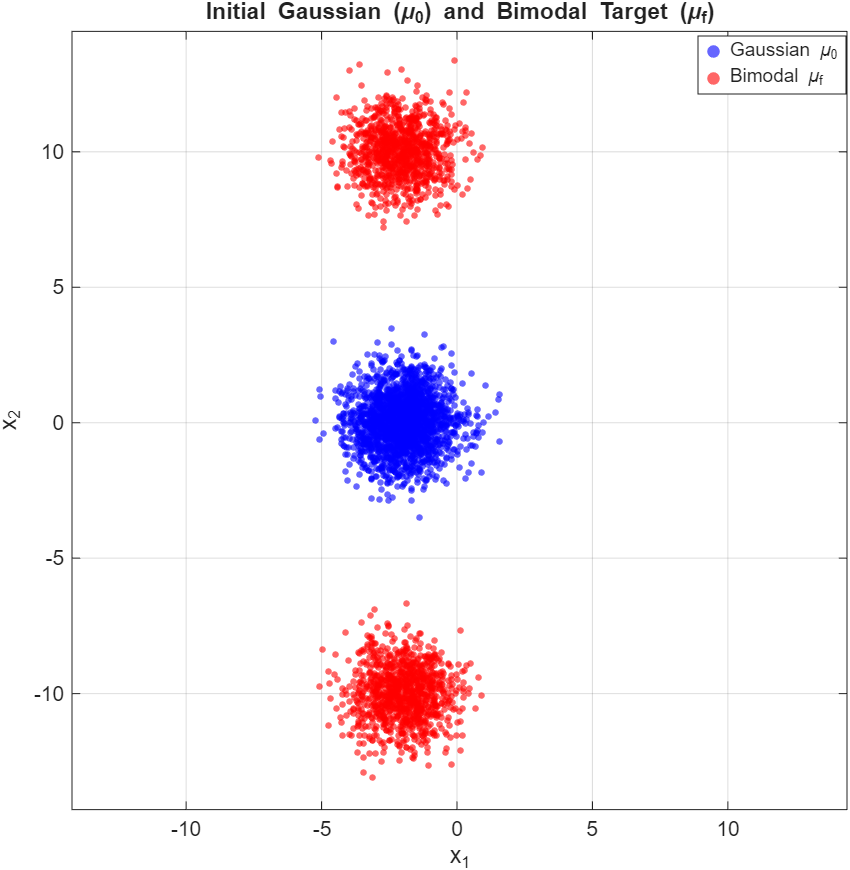}
        \caption{Samples from the Ornstein-Uhlenbeck process. The blue cloud represents samples from the starting Gaussian distribution $\mu_0$, while the red cloud represents samples drawn from the target mixture of Gaussians $\mu_f$.}
        \label{fig:initial_final_ornstein-uhlenbeck}
    \end{subfigure}
    \hfill
    \begin{subfigure}[t]{0.48\textwidth}
        \centering
        \includegraphics[width=\linewidth,height=5.5cm,keepaspectratio]{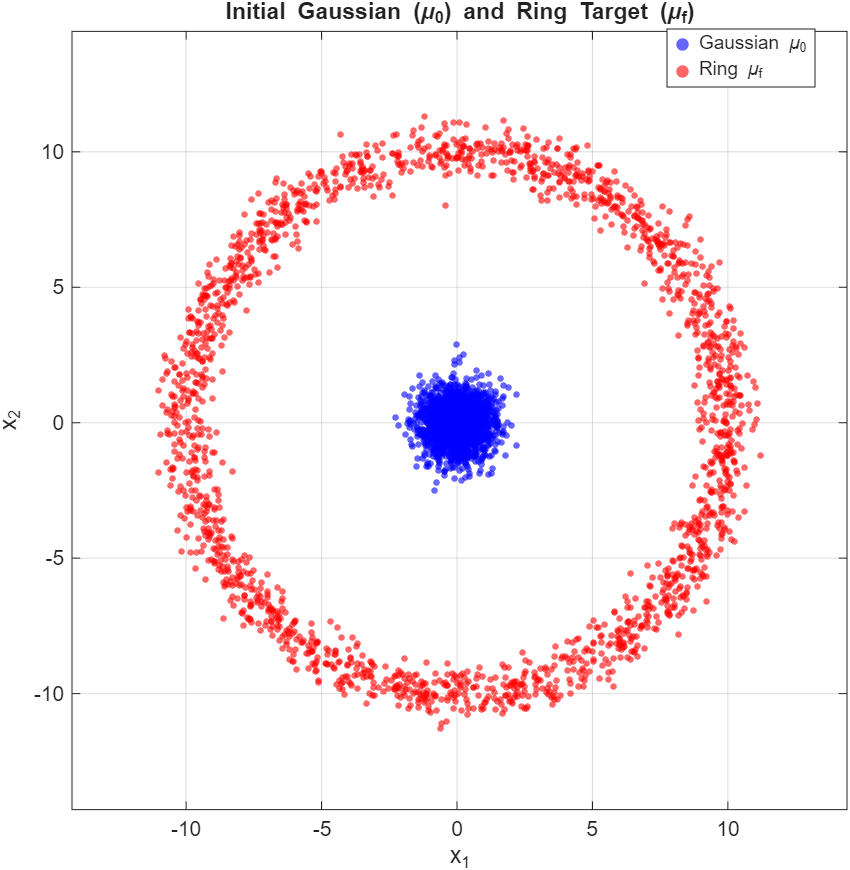}
        \caption{Samples from the anti-damped ensemble process. The blue cloud represents $\mu_0$ (initial Gaussian), while the red cloud represents $\mu_f$ (ring-like target).}
        \label{fig:initial_final_anti-damped}
    \end{subfigure}

    \caption{Comparison of initial and final distributions for the ensemble of Ornstein-Uhlenbeck and anti-damped processes. Both subplots are scaled to equal size for visual comparison.}
    \label{fig:distribution_comparison}
\end{figure}


\begin{figure}[h!]
    \centering
    \begin{subfigure}[t]{0.46\textwidth}
        \centering
        \includegraphics[width=\textwidth]{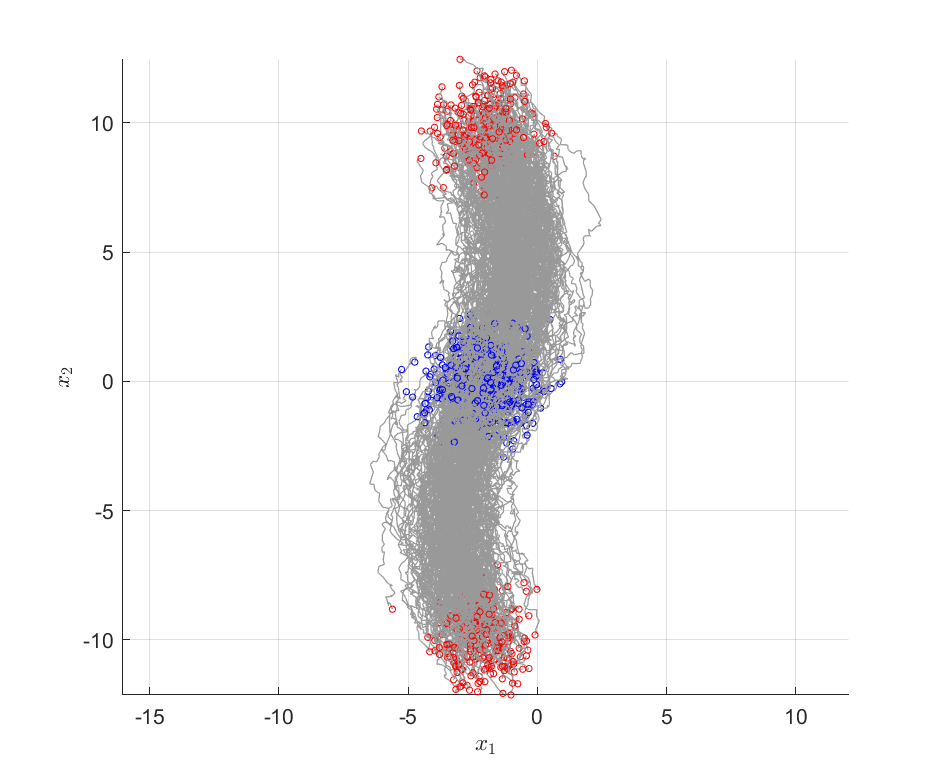}
        \caption{Sample trajectories of the averaged Ornstein-Uhlenbeck process~\eqref{eq:stochastic ensemble of systems} parameterized as~\eqref{eq: 2D-Ornstein-Uhelbeck_parameters} and controlled by the LSTM network. 
        Each gray curve represents one realization of the controlled state trajectory $\{x_{\epsilon}(t)\}_{0 \le t \le 1}$. 
        Blue dots mark samples from the initial distribution $\mu_0$, while red dots indicate the corresponding final states. 
        The LSTM control successfully steers the ensemble toward the target mixture of Gaussians.}
        \label{fig:lstm_control_ou}
    \end{subfigure}
    \hfill
    \begin{subfigure}[t]{0.46\textwidth}
        \centering
        \includegraphics[width=\textwidth]{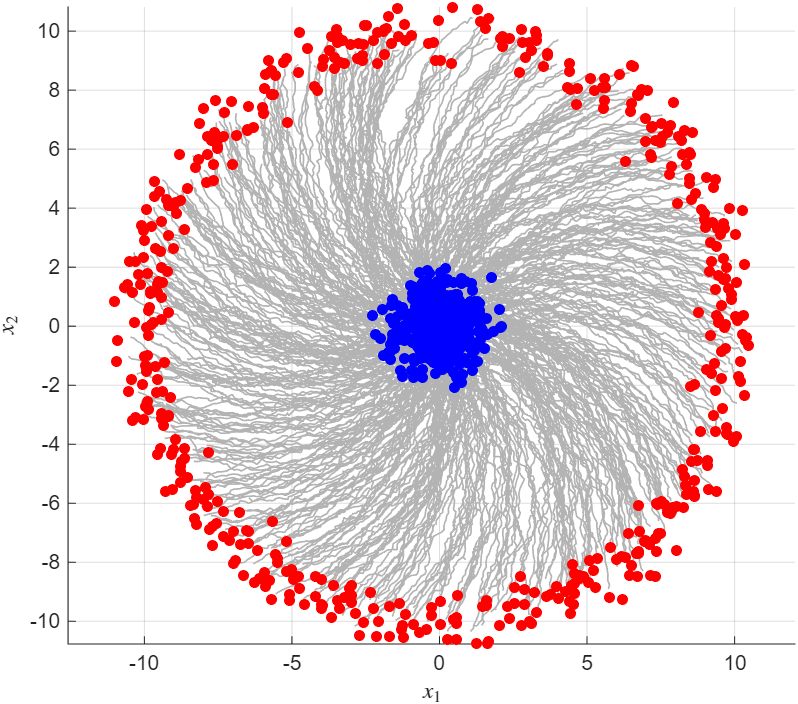}
        \caption{Sample trajectories of the averaged of anti-damped process~\eqref{eq:stochastic ensemble of systems} parameterized by~\eqref{eq: 2D_anti-damped_parameters} under LSTM control. 
        Each gray curve represents one realization of the state trajectory $\{x_{\epsilon}(t)\}_{0 \le t \le 1}$ starting from $\mu_0$.  
        Blue dots denote the initial samples, while red dots mark the endpoints. 
        The learned control expands and organizes the ensemble into the target ring-like distribution.}
        \label{fig:lstm_control_anti_damped}
    \end{subfigure}
    \caption{Comparison of controlled trajectories using LSTM control for two systems: 
    (A) the Ornstein-Uhlenbeck process and (B) the anti-damped process. 
    Both subplots visualize how the learned control steers the stochastic ensemble from $\mu_0$ toward $\mu_f$ under different system dynamics.}
    \label{fig:control_comparison}
\end{figure}

\begin{figure}[h!]
    \centering
    \begin{subfigure}[t]{0.48\textwidth}
        \centering
        \includegraphics[width=\linewidth,height=15.5cm,keepaspectratio]{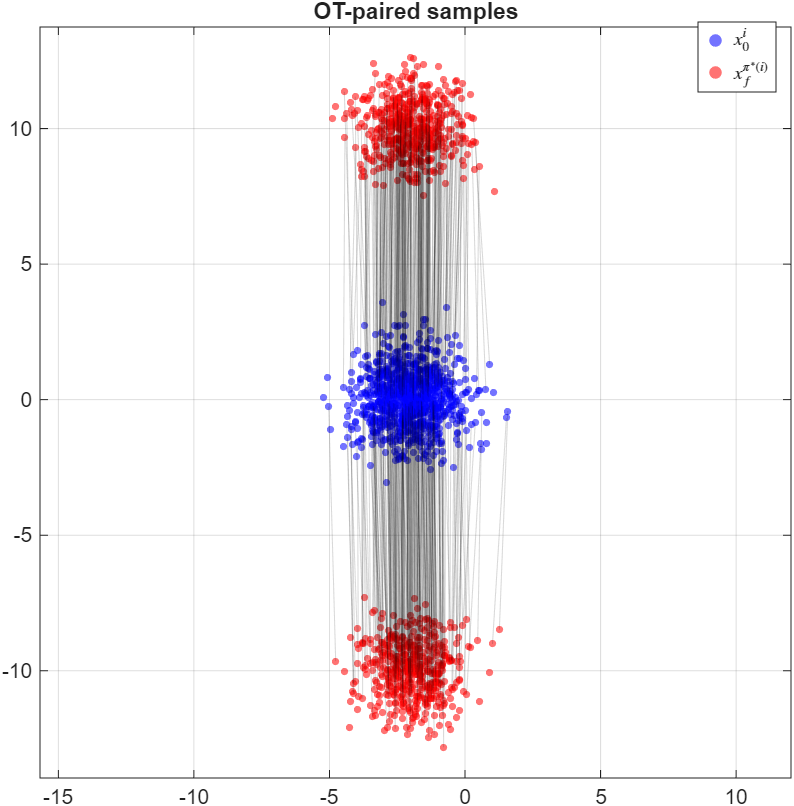}
        \caption{Optimal Transport (OT) pairing between initial samples $x_0^i$ (blue) drawn from a Gaussian distribution and target samples $T(x_0^i)=x_f^{\pi^*(i)}$ (red) drawn from a bimodal Gaussian mixture. 
        Each gray line represents an OT coupling between a sample from the source and one from the target distribution, illustrating the one-to-one transport map that serves as supervision for the control learning.}
        \label{fig:ot_coupling_bimodal}
    \end{subfigure}
    \hfill
    \begin{subfigure}[t]{0.48\textwidth}
        \centering
        \includegraphics[width=\textwidth, height=8.1cm]{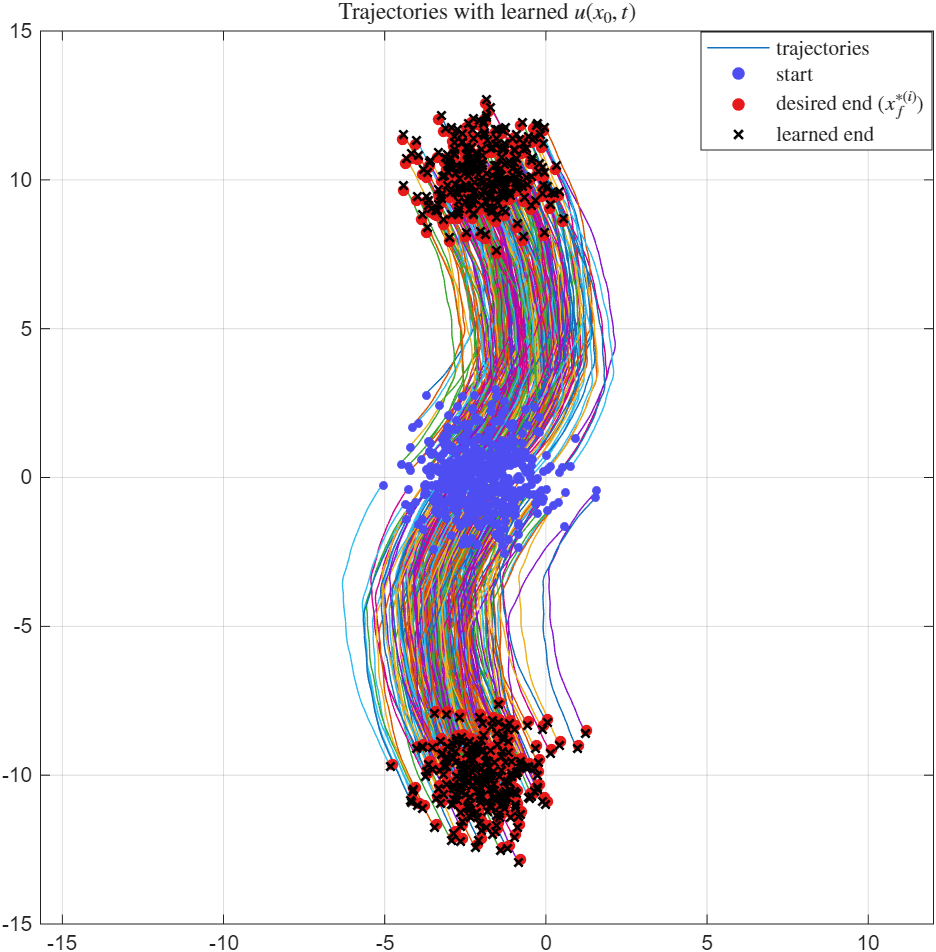}
        \caption{Sample trajectories generated by integrating the learned open-loop control $u(x_0,t)$ obtained from a feedforward neural network and initial-final OT coupling in~(A). 
        Each colored curve corresponds to one controlled trajectory $\{x(t)\}_{0\le t\le 1}$ in~\eqref{eq: noiseless averaged process} starting from $x_0^i$ (blue) and evolving toward the learned terminal points the black crosses and is compared to target samples $T(x_0^i)=x_f^{\pi^*(i)}$ from the OT pairing.}
        \label{fig:learned_open_loop_trajectories}
    \end{subfigure}
    \caption{Comparison between OT-paired samples and controlled trajectories learned from the OT map. 
    (A) OT coupling between initial and target distributions for a bimodal Gaussian target. 
    (B) Trajectories generated by the learned open-loop control $u(x_0,t)$ that dynamically transport samples from the initial Gaussian $\mu_0$ to the bimodal target $\mu_f$.}
    \label{fig:ot_vs_learned_control}
\end{figure}

\begin{figure}[h!]
    \centering
    \begin{subfigure}[t]{0.48\textwidth}
        \centering
        \includegraphics[width=\linewidth,height=15.5cm,keepaspectratio]{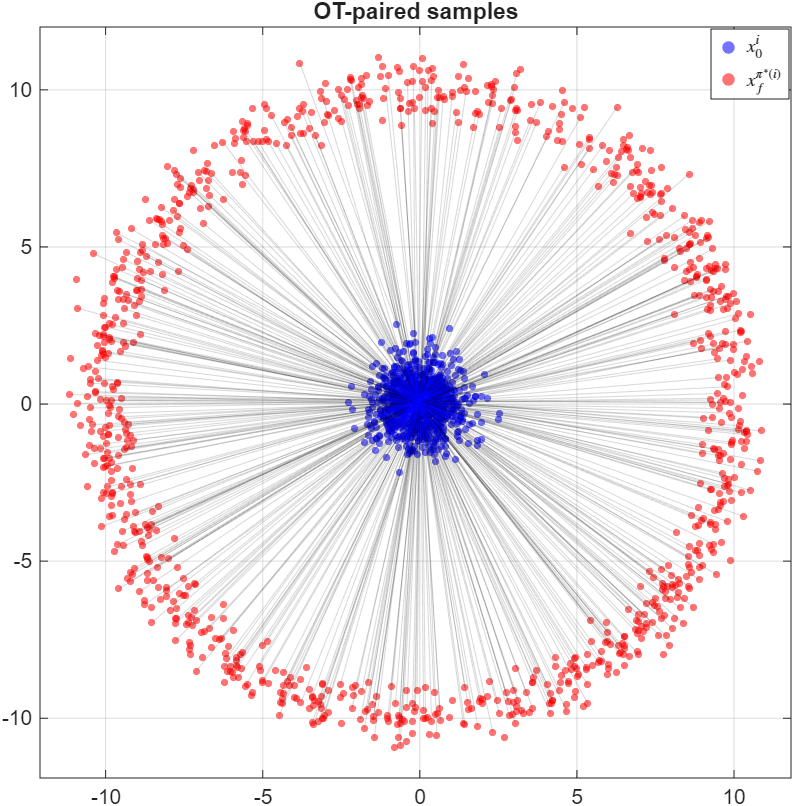}
        \caption{Optimal Transport (OT) coupling between initial samples $x_0^i$ (blue) drawn from the Gaussian source distribution $\mu_0$ and target samples $T(x_0^i)=x_f^{\pi^*(i)}$ (red) drawn from a ring-shaped target distribution $\mu_f$. 
        Each gray line represents the OT-paired correspondence between an initial and target sample, defining the static transport plan for the learning stage.}
        \label{fig:ot_coupling_anti_damped}
    \end{subfigure}
    \hfill
    \begin{subfigure}[t]{0.48\textwidth}
        \centering
        \includegraphics[width=\textwidth, height=8.1cm]{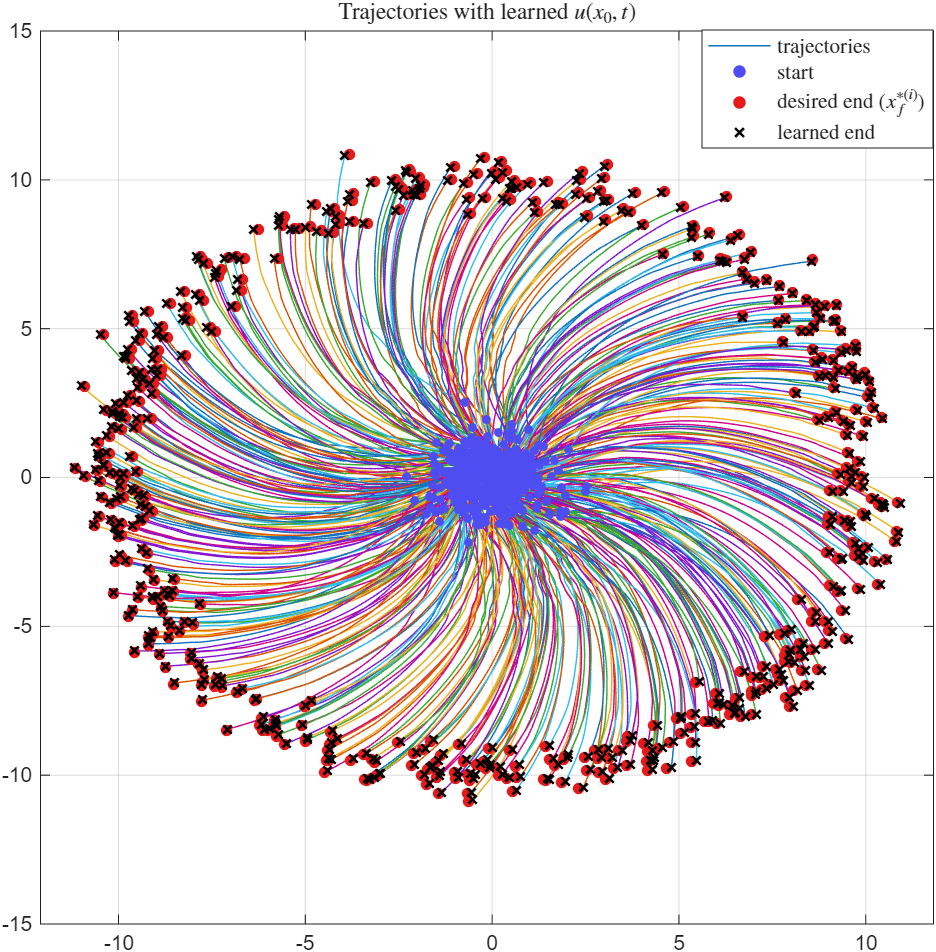}
        \caption{Sample trajectories generated by integrating the learned open-loop control $u(x_0,t)$ derived from the OT coupling in~ Figure~(A). 
        Each colored curve represents one controlled trajectory $\{x(t)\}_{0\le t\le 1}$ in~\eqref{eq: noiseless averaged process} starting from $x_0^i$ (blue) and evolving toward the learned terminal states black crosses and is compared to target samples $T(x_0^i)=x_f^{\pi^*(i)}$. from the OT pairing.}
        \label{fig:learned_open_loop_anti_damped}
    \end{subfigure}
    
    \caption{Comparison between the OT coupling and the dynamic transport induced by the learned open-loop control for the anti-damped process with $\epsilon=0$. 
    (A) The OT map defines the desired correspondence between initial Gaussian samples and the circular target distribution. 
    (B) The learned control $u(x_0,t)$ realizes this transport dynamically, steering trajectories from $\mu_0$ to $\mu_f$ along smooth paths that respect the system dynamics.}
    \label{fig:ot_vs_learned_control_anti_damped}
\end{figure}


\section{Conclusion}
We have studied a flow matching problem in an ensemble control theoretic framework. We have shown that in the case of a noisy system, this leads to a class of non-Markovian flow matching. To address the memory of the flow matching, we proposed a more amenable numerical methodology (LSTMs) in the learning process.  In the case where there is no noise, one can employ the standard FNN architecture. However, to recover the full final distribution, one must train an open-loop control using deterministic coupling (e.g., an OT permutation plan). As a by-product, we show that under any deterministic coupling one reduces the learning to a one-dimensional regression in time for the gain.

One possible future work is to interpolate using Volterra linear control process.
We believe that the control process for the noisy process is also a Volterra process with memory and our numerical approach will be effective in the learning or training process. We state here that Volterra process have significant applications~\cite{NSG-SCM-EWM:71,EZ:21}. Another direction is to extend discrete flow matching \cite{AC-JY-RB-TR-TJ:24,IG-TR-NS-FK-RTC-GS-YL:24} to time-continuous non-Markovian processes on discrete spaces. This offers potential improvements in generative modeling, particularly for tasks with temporal dependencies. 


\begin{thebibliography}{10}

\bibitem{YL-RTC-HBH-MN-ML:22}
Y.~Lipman, R.~T. Chen, H.~Ben-Hamu, M.~Nickel, and M.~Le,
``Flow matching for generative modeling,''
\emph{arXiv preprint} arXiv:2210.02747, 2022.

\bibitem{MSA-EVE:22}
M.~S. Albergo and E.~Vanden-Eijnden,
``Building normalizing flows with stochastic interpolants,''
\emph{arXiv preprint} arXiv:2209.15571, 2022.

\bibitem{XL-CG-QL:22}
X.~Liu, C.~Gong, and Q.~Liu,
``Flow straight and fast: Learning to generate and transfer data with rectified flow,''
\emph{arXiv preprint} arXiv:2209.03003, 2022.

\bibitem{RTC-YR-JB-DKD:18}
R.~T. Chen, Y.~Rubanova, J.~Bettencourt, and D.~K. Duvenaud,
``Neural ordinary differential equations,''
in \emph{Advances in Neural Information Processing Systems} (NeurIPS) \textbf{31}, 2018.

\bibitem{WG-RTC-JB-IS-DD:18}
W.~Grathwohl, R.~T. Chen, J.~Bettencourt, I.~Sutskever, and D.~Duvenaud,
``FFJORD: Free-form continuous dynamics for scalable reversible generative models,''
\emph{arXiv preprint} arXiv:1810.01367, 2018.

\bibitem{RWB:07}
R.~W. Brockett,
``Optimal control of the Liouville equation,''
\emph{AMS IP Studies in Advanced Mathematics}, vol.~39, pp.~23--35, 2007.

\bibitem{FAB-EZ:12}
F.~Alabau-Boussouira, R.~W. Brockett, O.~Glass, J.~Le~Rousseau, and E.~Zuazua,
``Notes on the control of the Liouville equation,''
in \emph{Control of Partial Differential Equations: Cetraro, Italy 2010},
P.~Cannarsa and J.-M.~Coron (Eds.), pp.~101--129, Springer, 2012.

\bibitem{YC-TTG-MP:16}
Y.~Chen, T.~T. Georgiou, and M.~Pavon,
``Optimal transport over a linear dynamical system,''
\emph{IEEE Transactions on Automatic Control}, vol.~62, no.~5, pp.~2137--2152, 2016.

\bibitem{YC-TTG-MP:16b}
Y.~Chen, T.~T. Georgiou, and M.~Pavon,
``On the relation between optimal transport and Schr\"{o}dinger bridges: A stochastic control viewpoint,''
\emph{Journal of Optimization Theory and Applications}, vol.~169, pp.~671--691, 2016.

\bibitem{YC:23}
Y.~Chen,
``Density control of interacting agent systems,''
\emph{IEEE Transactions on Automatic Control}, vol.~69, no.~1, pp.~246--260, 2023.

\bibitem{PDP:91}
P.~Dai~Pra,
``A stochastic control approach to reciprocal diffusion processes,''
\emph{Applied Mathematics and Optimization}, vol.~23, no.~1, pp.~313--329, 1991.

\bibitem{JSL-NK:06}
J.~S. Li and N.~Khaneja,
``Control of inhomogeneous quantum ensembles,''
\emph{Physical Review A}, vol.~73, no.~3, 030302, 2006.

\bibitem{JSL:06}
J.~S. Li,
``Control of inhomogeneous ensembles,''
Ph.D. thesis, Harvard University, 2006.

\bibitem{JSL-NK:05}
N.~Khaneja and J.~S. Li,
``Noncommuting vector fields, polynomial approximations and control of inhomogeneous quantum ensembles,''
\emph{arXiv preprint} quant-ph/0510012, 2005.

\bibitem{MHL:86}
M.~H. Levitt,
``Composite pulses,''
\emph{Progress in Nuclear Magnetic Resonance Spectroscopy}, vol.~18, no.~2, pp.~61--122, 1986.

\bibitem{ZE:14}
E.~Zuazua,
``Averaged control,''
\emph{Automatica}, vol.~50, no.~12, pp.~3077--3087, 2014.

\bibitem{EBL-LM:67}
E.~B. Lee and L.~Markus,
\emph{Foundations of Optimal Control Theory}.
New York: Wiley, 1967.

\bibitem{ADO:22}
D.~O. Adu,
``Optimal transport for averaged control,''
\emph{IEEE Control Systems Letters}, vol.~7, pp.~727--732, 2022.

\bibitem{DOA-YC:23}
D.~O. Adu and Y.~Chen,
``Stochastic bridges over ensemble of linear systems,''
in \emph{Proc. 62nd IEEE Conference on Decision and Control (CDC)}, pp.~2803--2808, 2023.

\bibitem{DOA-YC:24}
D.~O. Adu and Y.~Chen,
``Schr\"{o}dinger Bridge over Averaged Systems,''
\emph{arXiv preprint} arXiv:2412.03294, 2024.

\bibitem{YM-MA-AT-YC:24}
Y.~Mei, M.~Al-Jarrah, A.~Taghvaei, and Y.~Chen,
``Flow matching for stochastic linear control systems,''
\emph{arXiv preprint} arXiv:2412.00617, 2024.

\bibitem{RWB:15}
R.~W. Brockett,
\emph{Finite Dimensional Linear Systems}.
Philadelphia: SIAM, 2015.

\bibitem{CY-GT:15}
Y.~Chen and T.~T. Georgiou,
``Stochastic bridges of linear systems,''
\emph{IEEE Transactions on Automatic Control}, vol.~61, no.~2, pp.~526--531, 2015.

\bibitem{NSG-SCM-EWM:71}
N.~S. Goel, S.~C. Maitra, and E.~W. Montroll,
``On the Volterra and other nonlinear models of interacting populations,''
\emph{Reviews of Modern Physics}, vol.~43, no.~2, p.~231, 1971.

\bibitem{DW:91}
D.~Williams,
\emph{Probability with Martingales}.
Cambridge University Press, 1991.

\bibitem{PB:95}
P.~Billingsley,
\emph{Probability and Measure}, 3rd ed.
John Wiley \& Sons, 1995.

\bibitem{EZ:21}
E.~Zuniga,
``Volterra processes and applications in finance,''
Ph.D. thesis, Universit\'{e} Paris-Saclay, 2021.

\bibitem{AC-JY-RB-TR-TJ:24}
A.~Campbell, J.~Yim, R.~Barzilay, T.~Rainforth, and T.~Jaakkola,
``Generative flows on discrete state-spaces: Enabling multimodal flows with applications to protein co-design,''
\emph{arXiv preprint} arXiv:2402.04997, 2024.

\bibitem{IG-TR-NS-FK-RTC-GS-YL:24}
I.~Gat, T.~Remez, N.~Shaul, F.~Kreuk, R.~T. Chen, G.~Synnaeve, and Y.~Lipman,
``Discrete flow matching,''
in \emph{Advances in Neural Information Processing Systems} (NeurIPS), vol.~37, 2024.

\end{thebibliography}
\end{document}